\newcommand{\Tych}{\mathbf{Tych}}
\newcommand{\IR}{\mathbb R}
\newcommand{\IN}{\mathbb N}
\newcommand{\IQ}{\mathbb Q}
\newcommand{\X}{\mathcal{X}}
\newcommand{\Y}{\mathcal{Y}}
\newcommand{\Z}{\mathcal{Z}}
\newcommand{\id}{\mathrm{id}}
\newcommand{\M}{\mathfrak M}
\newcommand{\w}{\omega}
\newcommand{\supp}{\mathrm{supp}}
\newcommand{\CC}{C_k}
\newcommand{\cp}{\mathbf{C}_p}
\newcommand{\XN}{X^{<\omega}}
\newcommand{\DN}{D^{<\omega}}
\newcommand{\MN}{\mathfrak{M}^{<\omega}}
\newcommand{\scp}{\mathbf{SC}_p}
\newcommand{\sep}{\mathsf{sp}}
\newcommand{\Top}{\mathfrak{T}}
\newcommand{\Lin}{\mathfrak{L}}
\newcommand{\NN}{\mathbb{N}}
\newcommand{\Pp}{\mathfrak{P}}
\newcommand{\Nn}{\mathcal{N}}
\newtheorem{theorem}{Theorem}[section]
\newtheorem{lemma}[theorem]{Lemma}
\newtheorem{corollary}[theorem]{Corollary}
\newtheorem{fact}[theorem]{Fact}
\newtheorem{problem}[theorem]{Problem}
\theoremstyle{definition}
\newtheorem{definition}[theorem]{Definition}
\newtheorem{example}[theorem]{Example}
\title[The $C_p$-stable closure of the class of separable metrizable spaces]{The $C_p$-stable  closure of the class\\ of separable metrizable spaces}
\author{T. Banakh,  S. Gabriyelyan}
\address{T. Banakh: Ivan Franko National University of Lviv (Ukraine) and Jan Kochanowski University in Kielce (Poland)}
\email{t.o.banakh@gmail.com}
\address{S. Gabriyelyan: Department of Mathematics, Ben-Gurion University of the Negev, Beer-Sheva, P.O. 653, Israel}
\email{saak@math.bgu.ac.il}
\keywords{function space with topology of pointwise convergence, ordered field}
\subjclass{46E10; 54C35; 54E18; 12J15; 06A05}
\begin{document}

\begin{abstract}
Denote by $\cp[\M_0]$ the $C_p$-stable closure of the class $\M_0$ of all separable metrizable spaces, i.e., $\cp[\M_0]$ is  the smallest class of topological spaces that contains $\M_0$ and is closed under taking subspaces, homeomorphic images, countable topological sums, countable Tychonoff products, and function spaces $C_p(X,Y)$. Using a recent deep result of Chernikov and Shelah (2014), we prove that $\cp[\M_0]$ coincides with the class of all Tychonoff spaces of cardinality strictly less than $\beth_{\w_1}$. Being motivated by the theory of Generalized Metric Spaces, we characterize also other natural $C_p$-type stable  closures  of the class $\M_0$.
\end{abstract}

\maketitle

\section{Introduction}

All topological spaces considered in this paper are assumed to be Tychonoff. Many natural and important classes $\X$ of topological spaces usually are preserved by basic topological operations, which motivated us in \cite{BG} to  introduce the following stability  property of $\X$. Following  \cite{BG}, we say that a class $\X$ of topological spaces is {\em stable} if $\X$ is closed under taking subspaces, homeomorphic images, countable topological sums, and countable Tychonoff products. For two classes $\X$ and $\tilde\X$ of topological spaces, we shall say that $\tilde\X$ is an {\em extension} of $\X$ if $\X\subset\tilde\X$. Among all stable extensions of $\X$ there is the smallest one  denoted by $[\X]$ and called the {\em stable closure} of the class $\X$.

It is clear that the class $\M_0$ of all separable metrizable spaces is stable. One of the most natural generalizations of the class $\M_0$ is the class of all cosmic spaces. Recall (see \cite{gruenhage}) that a topological space $X$ is called {\em cosmic} (a {\em $\sigma$-space}) if it is regular and possesses a countable (respectively, a $\sigma$-locally finite) network. A family $\mathcal{N}$ of subsets of $X$ is a {\em network} for $X$ if for any $x\in X$ and any open neighborhood $U$ of $x$ there is a set $N\in\mathcal{N}$ such that $x\in N\subset U$. The class $\mathfrak{C}$ of all cosmic spaces is stable  \cite{Mich} and so are many other classes of generalized metric spaces, in particular, the classes of $\sigma$-spaces, $\aleph_0$-spaces, $\aleph$-spaces, $\Pp_0$-spaces and $\Pp$-spaces (see, \cite{Mich, gruenhage, Banakh,GK-GMS1}). Clearly, the class $\Top$ of all topological spaces is stable, but the class $\Lin$ of all Lindel\"{o}f spaces is not stable. Note that any cosmic space is Lindel\"{o}f.

For two topological spaces $X$ and $Y$, we denote by  $\CC(X,Y)$ and $C_p(X,Y)$ the space $C(X,Y)$ of all continuous functions  from $X$ into $Y$ endowed with the compact-open topology and  the topology of pointwise convergence, respectively. 
Having in mind the stability of the classes of $\aleph_0$-spaces and $\Pp_0$-spaces under taking function spaces $\CC(X,Y)$ (see \cite{Mich,Banakh}), we defined  in \cite{BG}  a class $\Z$ of topological spaces to be {\em $\CC$-stable} if $\Z$ is stable and for any Lindel\"{o}f space $X\in\Z$ and any space $Y\in\Z$ the function space $\CC(X,Y)$ belongs to the class $\Z$. In  \cite{BG} we proved  that the smallest $\CC$-stable extension of the class $\M_0$ coincides with the class $\mathbf C_k(\M_0,\M_0)$ of all topological spaces which can be embedded into the function spaces $\CC(X,Y)$ for suitable separable metric spaces $X$ and $Y$.   These results  motivate the following definition.
\begin{definition}
Let $\X,\Y,\Z$ be classes of topological spaces. The class $\Z$ is called {\em $C_p^{\X\kern-1pt,\Y}$-stable} if $\Z$ is stable and for any spaces $X\in\Z\cap\X$ and $Y\in\Z\cap\Y$ the function space $C_p(X,Y)$ belongs to the class $\Z$.  The smallest $C_p^{\X\kern-1pt,\Y}$-stable extension of the class $\Z$ is denoted by $\cp^{\X\kern-1pt,\Y}[\Z]$ and called the {\em $C_p^{\X\kern-1pt,\Y}$-stable closure} of the class $\Z$.
\end{definition}
In this paper we shall describe the $C_p^{\X\kern-1pt,\Y}$-stable closures of the class $\M_0$ for the cases $\X$ and $\Y$ equal to $\M_0$ or $\Top$, the class of all topological spaces.

In case $\X=\Y=\Top$ the class  $\cp^{\Top\kern-1pt,\Top}[\Z]$ will be denoted by $\cp[\Z]$ and called the {\em $C_p$-stable closure} of $\Z$.

Our first theorem (which will be proved in Section~\ref{s2}) essentially follows from Michael's results \cite{Mich}.

\begin{theorem}\label{t:Cp-M0-M0}
The classes $\cp^{\Top\kern-1pt,\M_0}[\M_0]$ and $\cp^{\M_0\kern-1pt,\M_0}[\M_0]$ coincide with the class of all cosmic spaces.
\end{theorem}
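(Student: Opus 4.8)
The plan is to prove the two inclusions $\cp^{\Top\kern-1pt,\M_0}[\M_0]\subseteq\mathfrak{C}$ and $\mathfrak{C}\subseteq\cp^{\M_0\kern-1pt,\M_0}[\M_0]$, and then to finish with the trivial inclusion $\cp^{\M_0\kern-1pt,\M_0}[\M_0]\subseteq\cp^{\Top\kern-1pt,\M_0}[\M_0]$, which holds because every $C_p^{\Top\kern-1pt,\M_0}$-stable class is automatically $C_p^{\M_0\kern-1pt,\M_0}$-stable (as $\M_0\subseteq\Top$, the defining requirement becomes only weaker). These three inclusions squeeze both closures onto $\mathfrak{C}$.

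For the first inclusion it is enough to check that the class $\mathfrak{C}$ of cosmic spaces is itself $C_p^{\Top\kern-1pt,\M_0}$-stable, since then minimality of the closure gives $\cp^{\Top\kern-1pt,\M_0}[\M_0]\subseteq\mathfrak{C}$. Stability of $\mathfrak{C}$ is known \cite{Mich}, so the only thing to verify is that $C_p(X,Y)$ is cosmic whenever $X$ is cosmic and $Y$ is separable metrizable. I would reduce this to $Y=\IR$: an embedding $Y\hookrightarrow\IR^\w$ induces an embedding $C_p(X,Y)\hookrightarrow C_p(X,\IR^\w)\cong C_p(X)^\w$, and cosmicity passes to subspaces and to countable products, so it suffices to see that $C_p(X)$ is cosmic. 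Being a subspace of $\IR^X$, $C_p(X)$ is Tychonoff, so I only need a countable network for it; starting from a countable network $\{N_n:n\in\w\}$ of $X$, I claim the countable family of all finite intersections $\bigcap_{i\le k}\{f\in C_p(X):f(N_{n_i})\subseteq(a_i,b_i)\}$ with $n_i\in\w$ and $a_i,b_i\in\IQ$ works — the verification only uses the continuity of a given $f$ at the finitely many points involved together with the regularity of $\IR$.

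For the second inclusion, let $X$ be an arbitrary cosmic space. Since $X$ is Tychonoff, the canonical evaluation map $\delta_X\colon X\to C_p(C_p(X))$, $\delta_X(x)(f)=f(x)$, is a topological embedding. By the first part $C_p(X)$ is cosmic, so by Michael's characterization \cite{Mich} there is a continuous surjection $\pi\colon M\to C_p(X)$ with $M$ separable metrizable. The dual map $\pi^{*}\colon C_p(C_p(X))\to C_p(M)$, $\pi^{*}(g)=g\circ\pi$, is then a topological embedding: it is injective because $\pi$ is surjective, continuous because each point evaluation $g\mapsto g(\pi(m))$ is, and a homeomorphism onto its image because, for every $\phi\in C_p(X)$, open $U\subseteq\IR$ and any $m\in\pi^{-1}(\phi)$, one has $\pi^{*}\big(\{g:g(\phi)\in U\}\big)=\pi^{*}\big(C_p(C_p(X))\big)\cap\{h:h(m)\in U\}$. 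Composing, $\pi^{*}\circ\delta_X$ embeds $X$ into $C_p(M)=C_p(M,\IR)$. Since $M,\IR\in\M_0\subseteq\cp^{\M_0\kern-1pt,\M_0}[\M_0]$ and this class is $C_p^{\M_0\kern-1pt,\M_0}$-stable, $C_p(M,\IR)\in\cp^{\M_0\kern-1pt,\M_0}[\M_0]$, and as the class is closed under subspaces, $X\in\cp^{\M_0\kern-1pt,\M_0}[\M_0]$, as required.

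I expect the genuinely nontrivial ingredient to be the one used in the second inclusion and borrowed from Michael \cite{Mich}: that every cosmic space is a continuous image of a separable metrizable space. Applying it to $C_p(X)$ also needs the elementary but essential observation that $C_p$ preserves cosmicity, which is exactly what the first part establishes. The rest is routine: $\delta_X$ is the standard embedding of a Tychonoff space into its second $C_p$-dual, the claim that $\pi^{*}$ is an embedding is the short computation indicated above, and both the identification $C_p(X,\IR^\w)\cong C_p(X)^\w$ and the network computation for $C_p(X)$ are elementary.
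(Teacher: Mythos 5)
Your proof is correct and follows essentially the same route as the paper: both directions rest on Michael's characterization of cosmic spaces, the canonical embedding $\delta\colon X\to C_p(C_p(X))$, and the dual embedding induced by a continuous surjection from a separable metrizable space onto $C_p(X)$, together with the observation that the class of cosmic spaces is itself $C_p^{\Top\kern-1pt,\M_0}$-stable. The only difference is cosmetic: where the paper simply cites Michael's theorem for the fact that $C_p(X,Y)$ is cosmic when $X$ is cosmic and $Y$ is separable metrizable, you inline a (correct) countable-network argument for $C_p(X)$ and reduce the general $Y$ to $\IR$ via $C_p(X,Y)\hookrightarrow C_p(X)^\w$.
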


The class $\cp^{\M_0\kern-1pt,\Top}[\M_0]$ is strictly larger than $\cp^{\M_0\kern-1pt,\M_0}[\M_0]=\cp^{\Top\kern-1pt,\M_0}[\M_0]$ and admits the following description. For a topological space $X$, let $X^{<\w}=\bigoplus_{n\in\w}X^n$ be the topological sum of all its finite powers. By $SC_p(X^{<\w})$ we denote the space of all real-valued separately continuous functions $f:X^{<\w}\to\IR$, endowed with the topology of pointwise convergence. For a class $\X$ of topological spaces, we denote  by $\scp(\X^{<\w})$ the class of topological spaces which embed into  function spaces $SC_p(X^{<\w})$ with $X\in\X$.

\begin{theorem}\label{t:Cp-M0-Top}
The class $\cp^{\M_0\kern-1pt,\Top}[\M_0]$ coincides with the class $\scp(\M_0^{<\w})$.
\end{theorem}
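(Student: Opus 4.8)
The plan is to prove the two inclusions separately; both hinge on a small collection of ``exponential laws'' for spaces of separately continuous functions, each verified by checking that the evident bijection is a homeomorphism for the topologies of pointwise convergence. I would record the following: (i) for a topological sum $X=\bigoplus_{i\in I}X_i$ and $m\in\w$, $SC_p(X^m)\cong\prod_{(i_1,\dots,i_m)\in I^m}SC_p(X_{i_1}\times\cdots\times X_{i_m})$, and hence $SC_p(X^{<\w})\cong\prod_{m\in\w}\prod_{(i_1,\dots,i_m)\in I^m}SC_p(X_{i_1}\times\cdots\times X_{i_m})$; (ii) $SC_p(X_1\times\cdots\times X_m)\cong C_p(X_1,SC_p(X_2\times\cdots\times X_m))$, so that $SC_p(X^m)$ is the $m$-fold iterate of the operation $Z\mapsto C_p(X,Z)$ applied to $\IR$; (iii) $C_p(X,\prod_{n\in\w}Z_n)\cong\prod_{n\in\w}C_p(X,Z_n)$; and (iv), combining (ii) and (iii), $C_p(X,SC_p(W^{<\w}))\cong\prod_{n\in\w}SC_p(X\times W^n)$.

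First, for the inclusion $\scp(\M_0^{<\w})\subseteq\cp^{\M_0\kern-1pt,\Top}[\M_0]$: since $\cp^{\M_0\kern-1pt,\Top}[\M_0]$ is stable, hence closed under subspaces, it suffices to show that $SC_p(X^{<\w})\in\cp^{\M_0\kern-1pt,\Top}[\M_0]$ for every $X\in\M_0$. Starting from $\IR\in\M_0\subseteq\cp^{\M_0\kern-1pt,\Top}[\M_0]$ and using $X\in\M_0\subseteq\cp^{\M_0\kern-1pt,\Top}[\M_0]$, applying the $C_p(X,-)$ clause of $C_p^{\M_0\kern-1pt,\Top}$-stability $m$ times puts $SC_p(X^m)$ into $\cp^{\M_0\kern-1pt,\Top}[\M_0]$ for every $m$ (by (ii)), and then closure under countable products together with (i) gives $SC_p(X^{<\w})\in\cp^{\M_0\kern-1pt,\Top}[\M_0]$.

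Second, for $\cp^{\M_0\kern-1pt,\Top}[\M_0]\subseteq\scp(\M_0^{<\w})$: I would show that $\scp(\M_0^{<\w})$ is a $C_p^{\M_0\kern-1pt,\Top}$-stable class containing $\M_0$ and then invoke minimality of $\cp^{\M_0\kern-1pt,\Top}[\M_0]$. The class contains $\M_0$ because every separable metrizable space embeds into $\IR^\w=C_p(\IN,\IR)=SC_p(\IN^1)$, which is a factor of $SC_p(\IN^{<\w})$ with $\IN\in\M_0$; closure under subspaces and homeomorphic images is immediate. The key clause is closure under $C_p(X,Y)$ with $X\in\M_0$ and $Y\in\scp(\M_0^{<\w})$: writing $Y\hookrightarrow SC_p(W^{<\w})$ with $W\in\M_0$, one obtains $C_p(X,Y)\hookrightarrow C_p(X,SC_p(W^{<\w}))\cong\prod_{n\in\w}SC_p(X\times W^n)$ by (iv); since $X\times W^n$ is homeomorphic to a summand of $(X\oplus W)^{n+1}$, law (i) exhibits each $SC_p(X\times W^n)$ as a factor of $SC_p((X\oplus W)^{<\w})$, these factors lying at distinct levels $m=n+1$, so that $\prod_{n\in\w}SC_p(X\times W^n)$ is a sub-product of $SC_p((X\oplus W)^{<\w})$; hence $C_p(X,Y)\hookrightarrow SC_p((X\oplus W)^{<\w})$ and $X\oplus W\in\M_0$. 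Closure under countable products is proved along the same lines: $\prod_k Z_k\hookrightarrow\prod_k SC_p(X_k^{<\w})$, and by (i) — using the absorption homeomorphism $\IR^\w\times\prod_k C_p(X_k,\IR)\cong\prod_k C_p(X_k,\IR)$ to accommodate the one-dimensional factors $SC_p(X_k^0)=\IR$ — this product embeds into $SC_p((\bigoplus_k X_k)^{<\w})$. Closure under countable topological sums then follows from closure under countable products together with $\IR^\w\in\scp(\M_0^{<\w})$, via the standard ``marking'' embedding $\bigoplus_k Z_k\hookrightarrow\prod_k(Z_k\times\IR)$ sending $z$ in the $j$-th summand to the point whose $\IR$-coordinates form the sequence $(\delta_{jk})_{k\in\w}$ and whose $Z_k$-coordinate is a fixed point of $Z_k$ for $k\ne j$ and is $z$ for $k=j$.

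The hard part will be the exponential laws, above all (iv): one must check carefully that a function on a product of spaces is separately continuous precisely when the associated map into the pointwise-convergence function space is continuous and takes values in the correct $SC_p$-space, and then carry out the coordinate bookkeeping so that all of the $SC_p$-spaces produced really occur as factors of one space $SC_p(V^{<\w})$ with $V\in\M_0$, the surplus one-dimensional factors being absorbed into a copy of $\IR^\w$. A further point needing care is that the marking map for countable sums is a genuine topological embedding rather than merely a continuous bijection onto its image, which rests on the marking coordinates taking values outside the subspaces $Z_k$.
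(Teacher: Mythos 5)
Your proposal is correct and follows essentially the same route as the paper: the forward inclusion by iterating the exponential law $SC_p(X^{n+1})\cong C_p(X,SC_p(X^{n}))$ together with closure under countable products, and the reverse inclusion by verifying that $\scp(\M_0^{<\w})$ is itself a $C_p^{\M_0\kern-1pt,\Top}$-stable class containing $\M_0$. The only differences are bookkeeping choices: the paper realizes $X\times Z^n$ as a retract of $(X\times Z)^n$ rather than a summand of $(X\oplus Z)^{n+1}$, and it proves closure under countable sums directly (by extension by zero) and then derives countable products via $\prod_{n}X_n\subset X^\w=C_p(\w,\bigoplus_n X_n)$, whereas you prove products directly and derive sums from them by a marking embedding.
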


Theorem~\ref{t:Cp-M0-Top} motivates studying the class $\scp(\M_0^{<\w})$ in more details.
It turns out that this class contains all cosmic spaces, all metrizable spaces of weight $\le\mathfrak c$ and also all generalized ordered spaces whose order topology is second countable. Let us recall the necessary definitions related to generalized ordered spaces.

By a {\em linearly ordered space} we understand a linearly ordered set $(X,\leq)$ endowed with the {\em order topology} generated by the subbase consisting of open half-intervals $(\leftarrow,a)=\{x\in X:x<a\}$ and $(a,\rightarrow)=\{x\in X:x>a\}$ where $a\in X$. By \cite[3.12.4(d)]{Eng}, the weight $w(X)$ of a linearly ordered space $X$ coincides with its network weight $nw(X)$. A subset $C\subset X$ is {\em order convex} if for any points $x\leq y$ in $C$ the order interval $[x,y]=\{z\in X:x\leq z\leq y\}$ is contained in $C$.

A {\em generalized ordered space} (briefly, a {\em $GO$-space}) is a topological space $X$ endowed with a linear order $\leq$ such that open order convex subsets of $X$ form a base of the topology of $X$. Standard examples of $GO$-spaces are the Sorgenfrey line and the Michael line (see \cite[1.2.2 and 5.1.32]{Eng}). Also the real line $\IR_d$ endowed with the discrete topology is a $GO$-space.

By the {\em order topology} on a $GO$-space $X$ we understand the topology on $X$ generated by the linear order. The {\em linear weight} $lw(X)$ of a $GO$-space $X$ is the weight of its order topology.
In particular, $lw(\IR_d)=\w$. It follows that $w(X)\geq nw(X)\geq lw(X)$ for each $GO$-space $X$. Note that $w(\IR_d)=\mathfrak{c}> lw(\IR_d)=\aleph_0$. 

\begin{theorem}\label{t:lower} The class $\scp(\M_0^{<\w})$ contains all cosmic spaces, all metrizable spaces of weight $\le\mathfrak c$, and all generalized ordered spaces of countable linear weight.
\end{theorem}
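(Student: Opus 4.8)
The plan is to handle the three families of spaces separately; only the class of $GO$-spaces requires genuine work.

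\textbf{Cosmic spaces.} This is essentially automatic. Every $C_p^{\M_0\kern-1pt,\Top}$-stable class is in particular $C_p^{\M_0\kern-1pt,\M_0}$-stable (as $\M_0\subseteq\Top$), so $\cp^{\M_0\kern-1pt,\M_0}[\M_0]\subseteq\cp^{\M_0\kern-1pt,\Top}[\M_0]$; by Theorem~\ref{t:Cp-M0-M0} the former is the class of all cosmic spaces and by Theorem~\ref{t:Cp-M0-Top} the latter is $\scp(\M_0^{<\w})$.

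\textbf{Metrizable spaces of weight $\le\mathfrak c$.} Here the plan is to embed the metric hedgehog $J(\mathfrak c)$ with $\mathfrak c=|\IR|$ spines into $SC_p(\IR^2)$ — which, being a topological factor of $SC_p(\IR^{<\w})$, lies in $\scp(\M_0^{<\w})$ — and then to apply the classical hedgehog universality theorem (Kowalsky; see~\cite{Eng}): every metrizable space of weight $\le\mathfrak c$ embeds into the countable power $J(\mathfrak c)^\w$, which belongs to $\scp(\M_0^{<\w})$ because this class is stable. To produce the embedding, index the spines by $a\in\IR$ and set $g_a(x,y)=\frac{(x-a)(y-a)}{(x-a)^2+(y-a)^2}$ for $(x,y)\ne(a,a)$, $g_a(a,a)=0$; then $g_a\in SC_p(\IR^2)$, since $g_a$ is continuous off the diagonal line through $(a,a)$ and vanishes identically on the two coordinate lines through $(a,a)$. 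Sending the point at radius $r$ on the $a$-th spine to $r\,g_a$, and the centre to the zero function, should be an embedding: it is continuous (affine on each spine, with $|r\,g_a|\le r/2\to0$ as $r\to0$, which takes care of the centre), injective (one recovers $r$ as twice the generic value of the restriction of $r\,g_a$ to the diagonal $\{(t,t)\}$, and $a$ as the unique point where that restriction drops to $0$), and inverse-continuous (spine $a$ near radius $r>0$ is pinned down by the two evaluations at $(a,a)$ and at one further diagonal point). The remaining estimates are routine.

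\textbf{$GO$-spaces of countable linear weight.} Given such $(X,\le,\tau)$, I would first normalise. The order topology $\tau_{ord}$ is second countable and regular, hence metrizable; and a second-countable LOTS has only countably many jumps (an uncountable $\subseteq$-chain of open half-lines contradicts second countability), so it order-embeds into $\IR$. Thus assume $X\subseteq\IR$ with $\tau_{ord}$ the subspace topology and $\le$ the usual order. Writing $X^+=\{x\in X:[x,\to)\in\tau\}$ and $X^-=\{x\in X:(\leftarrow,x]\in\tau\}$, the topology $\tau$ is generated by $\tau_{ord}$ together with the clopen rays $[a,\to)$ $(a\in X^+)$ and $(\leftarrow,a]$ $(a\in X^-)$ (the standard description of $GO$-topologies, cf.~\cite{Eng}). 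The crux — and the step I expect to be the main obstacle — is to realise these one-sided ``Sorgenfrey'' refinements inside some $SC_p(Y^{<\w})$ with $Y$ separable metrizable. The obvious attempt, encoding the ray $[a,\to)$ by a separately continuous function of two real variables with a ``step'' along the diagonal at $a$, such as $(u,v)\mapsto\frac{(u-a)_+(v-a)_+}{(u-a)^2+(v-a)^2}$ (where $t_+:=\max\{t,0\}$), fails: the assignment $x\mapsto(\text{this function with }a\text{ replaced by }x)$ is discontinuous at every point of $X\setminus X^+$ that is a left-limit of $X$, since its value at $(x_0,x_0)$ jumps, and such configurations do occur. The way around this is to let the \emph{base} of the function space be $M:=X^+$ itself (with its separable metrizable subspace topology) rather than $\IR$. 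For $x\in X$ put $h^M_x\in SC_p(M^2)$, $h^M_x(u,v)=\frac{(u-x)_+(v-x)_+}{(u-x)^2+(v-x)^2}$ (and $h^M_x(x,x)=0$ when $x\in M$); symmetrically define $\bar h^N_x\in SC_p(N^2)$ for $N:=X^-$ by replacing $(u-x)_+$ with $(x-u)_+$. Then $x\mapsto h^M_x$ is $\tau_{ord}$-continuous at every $x_0\notin M$ — for $(u,v)\in M^2$ one has $u,v\ne x_0$, so no singular point of the defining formula is approached — and it is $\tau$-continuous at $x_0\in M$ as well, since every $\tau$-net converging to $x_0$ is eventually $\ge x_0$ (because $[x_0,\to)\in\tau$) and then $h^M_x(x_0,x_0)=0=h^M_{x_0}(x_0,x_0)$; so $x\mapsto h^M_x$ is $\tau$-continuous, and likewise $x\mapsto\bar h^N_x$. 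Furthermore $h^M_a(a,a)=0$ whereas $h^M_x(a,a)=\tfrac12$ for every $x<a$, so $[a,\to)\cap X=\{x\in X:|h^M_x(a,a)|<\tfrac12\}$, and symmetrically for $(\leftarrow,a]\cap X$. Consequently $\Phi(x):=(x,\,h^M_x,\,\bar h^N_x)$ is injective, $\tau$-continuous, and induces on $X$ a topology containing both $\tau_{ord}$ and all the distinguished rays — hence all of $\tau$, while being contained in $\tau$ — so $\Phi$ is an embedding of $(X,\tau)$ into $\IR\times SC_p(M^2)\times SC_p(N^2)$. As $\scp(\M_0^{<\w})$ is a stable class that contains $\IR$ and every $SC_p(M^2)$ with $M\in\M_0$ (the latter being a topological factor of $SC_p(M^{<\w})$), it contains this product and therefore its subspace $(X,\tau)$.
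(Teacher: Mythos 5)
Your proposal is correct and follows essentially the same route as the paper: cosmic spaces via the inclusion $\cp^{\M_0\kern-1pt,\M_0}[\M_0]\subset\cp^{\M_0\kern-1pt,\Top}[\M_0]$, metrizable spaces of weight $\le\mathfrak c$ via a hedgehog with $\mathfrak c$ spines realized inside $SC_p(\IR\times\IR)$ together with Kowalsky's theorem, and $GO$-spaces via an order embedding into $\IR$ followed by one-sided separately continuous ``step'' functions whose domain is the square of the set of special points (your $h^M_x$ on $(X^+)^2$ is, up to reflection and a factor of $2$, the paper's $R_a$ on $\tilde X_r\times\tilde X_r$). In particular, the key trick you single out --- basing the function space on $X^+$ (resp.\ $X^-$) itself rather than on $\IR$ to salvage continuity of $x\mapsto h^M_x$ --- is exactly the device used in the paper's Lemma on $GO$-spaces.
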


An upper bound on the class $\scp(\M_0^{<\w})$ is given by the class of spaces with countable $i$-weight. We recall that the {\em $i$-weight} $iw(X)$ of a Tychonoff space $X$ is the smallest cardinal $\kappa$ for which there is an injective continuous map $i:X\to Y$ into a Tychonoff space $Y$ of weight $w(Y)\leq\kappa$. It is clear that $iw(X)\leq w(X)$, and if $iw(X)\leq\aleph_0$ then $|X|\leq \mathfrak{c}$. The following theorem, proved in Section~\ref{s:up}, gives an upper bound on the class $\scp(\M_0^{<\w})$.

\begin{theorem} \label{t:Cp(M)-iw} Each space $X\in \cp^{\M_0\kern-1pt,\Top}[\M_0]=\scp(\M_0^{<\w})$ has weight $w(X)\le\mathfrak c$, cardinality $|X|\le\mathfrak c$ and $i$-weight $iw(X)\le\aleph_0$.
\end{theorem}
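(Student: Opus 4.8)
The plan is to establish the three bounds by a single structural argument. Since $\scp(\M_0^{<\w})$ is the class of spaces embedding into some $SC_p(X^{<\w})$ with $X$ separable metrizable, and all three invariants in question ($w$, $|\cdot|$, $iw$) are monotone under taking subspaces, it suffices to prove the three inequalities for $Z=SC_p(X^{<\w})$ itself with $X$ separable metrizable. Fix such an $X$ and note that each finite power $X^n$ is separable metrizable, hence so is the topological sum $X^{<\w}=\bigoplus_{n\in\w}X^n$, which is therefore separable: pick a countable dense set $D\subset X^{<\w}$.

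The key point is that a separately continuous function $f\colon X^{<\w}\to\IR$ is \emph{not} determined by its values on $D$ (this is why we get $\mathfrak c$ and not $\aleph_0$ for the weight), but it \emph{is} determined by its values on $X^{<\w}$, and the latter has cardinality $\le\mathfrak c$. More precisely, first I would bound $|Z|$: the underlying set of $Z$ injects into $\IR^{X^{<\w}}$, and $|X^{<\w}|\le\mathfrak c$ since $X$ is separable metrizable (so $|X|\le\mathfrak c$ and $|X^n|\le\mathfrak c$ for all $n$), whence $|Z|\le\mathfrak c^{\mathfrak c}$ --- which is too big. So I need the finer observation that a separately continuous function $f\colon X^{<\w}\to\IR$ is determined by its restriction to $D^{<\w}$ in the sense that... actually the right statement is: on each $X^n$, a separately continuous $g\colon X^n\to\IR$ is continuous in each variable separately, hence $g$ is continuous on each fiber line through a point of the dense set, and a standard Baire-category / iterated-limit argument shows $g$ is determined by its values on $D\times\cdots\times D$ together with a bounded amount of limiting data --- the cleanest route is: a separately continuous function on a product of separable spaces has values determined by its restriction to the product of countable dense sets provided we also record, at each point, the (at most $\mathfrak c$-many) sequential limits used to reconstruct it. Concretely, $g$ on $X^n$ is the pointwise limit of functions continuous in fewer variables, iterating $n$ times down to functions of the dense variables; each such function lives in a space of size $\le\mathfrak c$, and the limiting process contributes only countably many choices, so $|C(X^n\text{-sep})|\le\mathfrak c$ and hence $|Z|\le\mathfrak c$. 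This simultaneously gives $|X|\le|Z|\le\mathfrak c$ if $X$ is embeddable in such a $Z$, consistent with the claim.

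Next I would bound the $i$-weight. The evaluation maps $e_d\colon Z\to\IR$, $e_d(f)=f(d)$, for $d\in D$, are continuous (by definition of the topology of pointwise convergence), but they do not separate the points of $Z$ --- two distinct separately continuous functions can agree on $D$. The remedy is to use, for each point $x\in X^{<\w}$, the fact that separate continuity lets us approach $x$ along coordinate directions through points of $D$, so $f(x)$ is an iterated sequential limit of values $f(d)$ with $d\in D$; hence the diagonal map $e=(e_d)_{d\in D}\colon Z\to\IR^D$ is \emph{injective}. Since $\IR^D$ is a metrizable (as $D$ is countable) space of weight $\aleph_0$, this witnesses $iw(Z)\le\aleph_0$. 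Then $iw(X)\le iw(Z)\le\aleph_0$ for any subspace $X$, and as noted in the excerpt $iw\le\aleph_0$ forces $|X|\le\mathfrak c$, re-deriving the cardinality bound. Finally, for the weight bound $w(Z)\le\mathfrak c$: the topology of $Z$ is inherited from $\IR^{X^{<\w}}$, whose weight is $|X^{<\w}|\cdot\aleph_0\le\mathfrak c\cdot\aleph_0=\mathfrak c$, and weight is monotone under subspaces, so $w(Z)\le\mathfrak c$ and hence $w(X)\le\mathfrak c$ for every $X\in\scp(\M_0^{<\w})$.

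The main obstacle is the injectivity of $e=(e_d)_{d\in D}$, equivalently the claim that a separately continuous real-valued function on $X^{<\w}$ is determined by its restriction to a countable dense set. This requires the standard but nontrivial fact that on a product $\prod_{i<n}X_i$ of second-countable (or even just separable) spaces, a separately continuous function is uniquely determined by its values on $\prod_{i<n}D_i$ with each $D_i$ dense: the proof goes by induction on $n$, fixing the last coordinate in $D_{n-1}$, using separate continuity in that coordinate to extend from $D_{n-1}$ to all of $X_{n-1}$, and invoking the inductive hypothesis for the first $n-1$ coordinates; the base case $n=1$ is density plus continuity. Once this lemma is in hand, the cardinality and $i$-weight bounds follow immediately, and the weight bound is essentially formal.
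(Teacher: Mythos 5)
Your proof is correct and takes essentially the same route as the paper: the restriction/evaluation map into $\IR^{E^{<\w}}$ for a countable dense $E\subset X$ is continuous and injective because separate continuity extends the agreement one coordinate at a time from $E^n$ to $X^n$, which yields $iw\le\aleph_0$ and $|\cdot|\le\mathfrak c$ at once, while $w\le\mathfrak c$ comes from sitting inside $\IR^{X^{<\w}}$. The one point to tidy is that the countable dense set must be taken of product form $E^{<\w}$ rather than an arbitrary countable dense subset of $X^{<\w}$, since the determination lemma genuinely needs the product structure --- which is exactly what your final lemma statement requires.
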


\begin{example}
By \cite{GKKM}, the Banach space $X=\ell_1(\mathfrak c)$ endowed with the weak topology is an $\aleph$-space of cardinality $|X|=\mathfrak c$, weight $w(X)=2^{\mathfrak c}$ and $i$-weight $iw(X)=\aleph_0$. By Theorem~\ref{t:Cp(M)-iw}, the space $X$ does not belong to the class $\cp^{\M_0\kern-1pt,\Top}[\M_0]=\scp(\M_0^{<\w})$. Let us recall \cite[\S11]{gruenhage} that a regular topological space $X$ is an {\em $\aleph$-space} if $X$ has a $\sigma$-locally finite $k$-network. A family $\Nn$ of subsets of $X$ is called a {\em $k$-network} in $X$  if for any open set $U\subset X$ and compact subset $K\subset U$ there is a finite subfamily  $\mathcal{F} \subset\mathcal{N}$ such that $K\subset \bigcup \mathcal{F}\subset U$.
\end{example}

Since  each compact space of countable $i$-weight is metrizable, Theorem~\ref{t:Cp(M)-iw} implies:
\begin{corollary}\label{c:Cp(M)-Com}
Each compact space in the class $\cp^{\M_0\kern-1pt,\Top}[\M_0]=\scp(\M_0^{<\w})$
is metrizable.
\end{corollary}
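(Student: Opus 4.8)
The corollary is essentially immediate from Theorem~\ref{t:Cp(M)-iw}, so the plan is simply to record the standard fact that a compact space of countable $i$-weight is metrizable. Let $X$ be a compact space lying in $\cp^{\M_0\kern-1pt,\Top}[\M_0]=\scp(\M_0^{<\w})$. By Theorem~\ref{t:Cp(M)-iw} we have $iw(X)\le\aleph_0$, and unfolding the definition of $i$-weight yields a continuous injective map $i:X\to Y$ into a Tychonoff space $Y$ of weight $w(Y)\le\aleph_0$. Replacing $Y$ by the subspace $i(X)$ (whose weight is still at most $\aleph_0$), we may assume that $i$ is a bijection.

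The one step that uses compactness is the observation that $i$ is then a homeomorphism: if $C\subseteq X$ is closed, it is compact, so $i(C)$ is a compact, hence closed, subset of the Hausdorff space $Y=i(X)$; thus $i$ is a closed continuous bijection and therefore a homeomorphism. Hence $X$ is homeomorphic to a subspace of a second countable Tychonoff space, so $X$ is itself second countable and regular, and the Urysohn metrization theorem gives that $X$ is metrizable.

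No real difficulty arises; the only point worth isolating is the upgrade from a continuous injection into a space of countable weight to a topological embedding, which is precisely where compactness of $X$ enters. This also clarifies why compactness forces metrizability inside $\cp^{\M_0\kern-1pt,\Top}[\M_0]$ even though, by Theorem~\ref{t:lower}, the class contains non-metrizable members such as $GO$-spaces of countable linear weight.
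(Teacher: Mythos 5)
Your proof is correct and follows the paper's route exactly: the paper derives the corollary from Theorem~\ref{t:Cp(M)-iw} together with the standard fact that a compact space of countable $i$-weight is metrizable, and you simply supply the (correct) verification of that standard fact via the closed-map argument and Urysohn metrization. Nothing is missing.
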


This corollary  shows that the class $\cp^{\M_0\kern-1pt,\Top}[\M_0]$ can be considered as a new class of generalized metric spaces.

Now we describe the $C_p$-stable closure $\cp[\M_0]:=\cp^{\Top\kern-1pt,\Top}[\M_0]$ of the class $\M_0$.  It turns out that the class $\cp[\M_0]$ is huge and coincides with the class of all  Tychonoff spaces of cardinality (or weight) strictly smaller than $\beth_{\w_1}$. Here $\beth_0=\aleph_0$ and $\beth_\alpha=\sup\{2^{\beth_\beta}:\beta<\alpha\}$ for any ordinal $\alpha>0$.

\begin{theorem}\label{t:Cp-Top-Top}
The $C_p$-stable closure $\cp[\M_0]$ of the class $\M_0$ of separable metrizable spaces coincides with the class of all Tychonoff spaces of cardinality strictly less than $\beth_{\w_1}$.
\end{theorem}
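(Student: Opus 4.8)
The plan is to prove the two inclusions separately. For the easy direction, I would show that every space in $\cp[\M_0]$ has cardinality $<\beth_{\w_1}$. This is a closure argument: the operations generating $\cp[\M_0]$ — subspaces, homeomorphic images, countable sums, countable Tychonoff products, and function spaces $C_p(X,Y)$ — all produce spaces whose cardinality stays below $\beth_{\w_1}$, provided the inputs do. The only nontrivial bound is for $C_p(X,Y)$: if $|X|<\beth_{\w_1}$ then a continuous function $X\to Y$ is determined by its values on a dense subset, but since we are in $\cp[\M_0]$ we can do better and use that such $X$ has countable $i$-weight or at least that the density character $d(C_p(X,Y))$ is controlled; in fact $|C_p(X,Y)|\le |Y|^{d(X)}\le |Y|^{w(X)}$, and by the Tychonoff bound $|C_p(X,Y)|\le |Y|^{|X|}$. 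If $|X|,|Y|<\beth_{\w_1}$, say $|X|\le\beth_\alpha$ and $|Y|\le\beth_\beta$ with $\alpha,\beta<\w_1$, then $|Y|^{|X|}\le 2^{\beth_\alpha\cdot\beth_\beta}\le 2^{\beth_{\max(\alpha,\beta)}}=\beth_{\max(\alpha,\beta)+1}<\beth_{\w_1}$ since $\max(\alpha,\beta)+1<\w_1$. Because the class of Tychonoff spaces of cardinality $<\beth_{\w_1}$ is stable and closed under $C_p$, it is a $C_p$-stable class containing $\M_0$, hence contains $\cp[\M_0]$.

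For the hard direction — that every Tychonoff space of cardinality $<\beth_{\w_1}$ lies in $\cp[\M_0]$ — I would proceed by transfinite induction on $\alpha<\w_1$, showing that every Tychonoff space of cardinality $\le\beth_\alpha$ belongs to $\cp[\M_0]$. The base case $\alpha=0$ is immediate since a countable Tychonoff space is separable metrizable. The limit step is immediate too (if $|X|\le\beth_\alpha=\sup_{\beta<\alpha}\beth_\beta^+$... more carefully: one reduces to the successor case since $\beth_\alpha$ for limit $\alpha$ is the supremum of $\beth_\beta$, $\beta<\alpha$, so a space of cardinality $\le\beth_\alpha$ embeds in... actually here one must be a bit careful, but a space of cardinality exactly $\beth_\alpha$ for limit $\alpha$ can be written as an increasing union and handled, or one simply notes $\cp[\M_0]$ is closed under subspaces so it suffices to treat cardinalities that are exactly $\beth_\alpha$ or to bound by the next successor). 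The crucial successor step $\alpha\to\alpha+1$ is where the Chernikov–Shelah result enters.

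The key mechanism for the successor step is: given a Tychonoff space $X$ with $|X|\le\beth_{\alpha+1}=2^{\beth_\alpha}$, I want to realize $X$ as a subspace of some $C_p(Y,Z)$ where $Y$ and $Z$ already belong to $\cp[\M_0]$ — specifically where $|Y|\le\beth_\alpha$ and $Z$ is (say) a separable metrizable space like $\IR$ or $\{0,1\}$ or a countable space. Since $X$ embeds into a Tychonoff cube $[0,1]^{w(X)}$ and $w(X)\le 2^{|X|}$ could be large, I instead embed $X$ into $\{0,1\}^\kappa$ for $\kappa=|X|\le 2^{\beth_\alpha}$ using that a Tychonoff space of cardinality $\kappa$ has $i$-weight $\le\kappa$... no — I need $X$ to be a \emph{subspace} of a function space. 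The right tool is a theorem producing, for any set $S$ of size $\le 2^{\beth_\alpha}$, a family of $2^{\beth_\alpha}$-many functions on a set of size $\beth_\alpha$ that is "independent" enough to encode an arbitrary topology; this is exactly the role of the Chernikov–Shelah theorem on the existence of large independent families / VC-type combinatorics. Concretely: the Chernikov–Shelah result guarantees a set $Y$ of size $\beth_\alpha$ and $2^{\beth_\alpha}$ subsets of $Y^{<\w}$ (or finite-dimensional pieces) realizing prescribed incidence patterns, which lets one embed an arbitrary Tychonoff space of that cardinality into a function space over $Y$. One builds $Y$ itself as a space in $\cp[\M_0]$ by the induction hypothesis (any Tychonoff topology on a set of size $\le\beth_\alpha$ works, e.g.\ the discrete one is fine if $\beth_\alpha\le\mathfrak c$, otherwise one uses the inductively-obtained freedom), then $C_p(Y,\IR)\in\cp[\M_0]$ and $X$ embeds there.

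The main obstacle, as the authors flag in the abstract, is precisely the combinatorial core: showing that a set of size $\le\beth_\alpha$ carries enough "continuous functions into $\IR$" (after suitably topologizing it within $\cp[\M_0]$) to embed \emph{every} Tychonoff space of cardinality $\le\beth_{\alpha+1}=2^{\beth_\alpha}$. Here one cannot use an arbitrary topology on $Y$; one needs a topology making $C_p(Y,\IR)$ contain a topological (not just set-theoretic) copy of the target $X$ with its given topology. I expect the argument to choose $Y$ to be a discrete-like or scattered space of size $\beth_\alpha$ inside $\cp[\M_0]$ (available by induction), to use that $C_p(Y,\IR)$ for $Y$ discrete of size $\beth_\alpha$ is just $\IR^{\beth_\alpha}$ which embeds everything of weight $\le\beth_\alpha$ — so the real issue is pushing weight up from $\beth_\alpha$ to $2^{\beth_\alpha}=\beth_{\alpha+1}$, and \emph{that} is where Chernikov–Shelah supplies an independent family of size $2^{\beth_\alpha}$ on a set of size $\beth_\alpha$, yielding a continuous injection of $\{0,1\}^{\beth_{\alpha+1}}$-worth of coordinates into a function space over a ground set of size only $\beth_\alpha$. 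Making this topological rather than merely injective, and checking that all intermediate spaces stay in the class, will be the delicate part; once the embedding is set up, closure of $\cp[\M_0]$ under subspaces and $C_p$ finishes the induction.
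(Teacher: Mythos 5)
Your first inclusion ($\cp[\M_0]$ is contained in the class of Tychonoff spaces of cardinality $<\beth_{\w_1}$) is correct and matches the paper's argument, which runs the same cardinal arithmetic on weights rather than cardinalities (the two are interchangeable below $\beth_{\w_1}$ since $|X|\le 2^{w(X)}$ and $w(X)\le 2^{|X|}$). The overall frame of the converse is also right: transfinite induction on $\alpha<\w_1$, with the limit step handled by countable sums of discrete cardinals and the observation that a discrete set $D$ of size $\kappa$ in the class yields $C_p(D)=\IR^{\kappa}$ and hence every Tychonoff space of weight $\le\kappa$.

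The gap is in the successor step, which is the heart of the theorem: you must manufacture, inside $\cp[\M_0]$, a \emph{discrete} space of cardinality $2^{\beth_\beta}$ starting only from spaces of weight $\le\beth_\beta$, and your proposal does not say how. Moreover, your guess about what Chernikov--Shelah supplies is off: their result (Fact~\ref{f:cher-shel}) is not an independent-family or VC-type statement but a statement about Dedekind cuts --- for any infinite $\kappa$ there is a chain $L_0,\dots,L_6$ of linear orders with $|L_0|\le\kappa$, $|L_{k+1}|\le|\check L_k|$ and $|L_6|\ge 2^\kappa$. Seven steps are needed precisely because the one-step version (a single order of size $\kappa$ with $2^\kappa$ cuts, i.e.\ $\ded\kappa=2^\kappa$) is independent of ZFC; indeed the paper records the corresponding one-step topological statement as an open problem that is only known under GCH. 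The topological device that converts ``many cuts'' into ``large discrete subspace'' is what your sketch is missing: for any ordered field $F$ the diagonal shifts of the separately continuous function $(x,y)\mapsto 2xy/(x^2+y^2)$ form a discrete subspace of cardinality $|F|$ in $SC_p(F\times F,F)\cong C_p(F,C_p(F,F))$ (Lemma~\ref{lemma:SCp}). Applying this to the Hahn field $F_k=H^2(\check L_k)$ --- which has topological weight $\le|L_k|$, since $L_k^\pm$ is order dense in $\check L_k$, yet cardinality $|\check L_k|\ge|L_{k+1}|$ --- and iterating six times along the Chernikov--Shelah chain carries you from $\beth_\beta$ up to $2^{\beth_\beta}=\beth_{\beta+1}$. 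Note also that the target of the function space must be the field $F_k$ itself, not $\IR$ or $\{0,1\}$ as you propose: it is the large ordered field of small weight that does the work. Without this (or an equivalent) mechanism the induction does not close.
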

We prove this theorem in Section~\ref{s6} using a recent deep result of Chernikov and Shelah \cite{CS}.

\section{Proof of Theorem~\ref{t:Cp-M0-M0}}\label{s2}

We shall deduce Theorem~\ref{t:Cp-M0-M0} from the  following characterization of cosmic spaces due to  Michael \cite{Mich}.

\begin{fact}[\cite{Mich}] \label{f:Cosmic} For a Tychonoff space $X$ the following conditions are equivalent:
\begin{enumerate}
\item $X$ is cosmic;
\item $X$ is  a continuous image of a separable metrizable space;
\item the function space $C_p(X):=C_p(X;\IR)$ is cosmic;
\item for any metrizable separable space $Y$ the function space $C_p(X,Y)$ is cosmic.
\end{enumerate}
\end{fact}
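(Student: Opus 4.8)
The plan is to establish the cycle $(2)\Rightarrow(1)\Rightarrow(3)\Rightarrow(1)$ together with the separate implication $(1)\Rightarrow(2)$ and the equivalence $(3)\Leftrightarrow(4)$, using only that $X$ is Tychonoff and that the class of cosmic spaces is stable (as recalled in the introduction), hence closed under subspaces and countable Tychonoff products. The implication $(2)\Rightarrow(1)$ is routine: if $q\colon M\to X$ is a continuous surjection from a separable metrizable $M$ with countable base $\mathcal B$, then $\{q(B):B\in\mathcal B\}$ is a countable network for $X$ (given $x\in U$ open, pick $m\in q^{-1}(x)$ and $B\in\mathcal B$ with $m\in B\subseteq q^{-1}(U)$), so $X$ is cosmic. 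The equivalence $(3)\Leftrightarrow(4)$ is almost as easy: $(4)\Rightarrow(3)$ is the instance $Y=\IR$, while for $(3)\Rightarrow(4)$ I would embed a separable metrizable $Y$ into $\IR^\w$, inducing an embedding $C_p(X,Y)\hookrightarrow C_p(X,\IR^\w)\cong C_p(X)^\w$; the target is a countable product of the cosmic space $C_p(X)$, hence cosmic, and so is its subspace $C_p(X,Y)$.

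For $(1)\Rightarrow(3)$, starting from a countable network $\Nn$ for $X$, I would check that the countable family of sets $W=\bigcap_{i\le n}\{g\in C(X):g(N_i)\subseteq(a_i,b_i)\}$, where $N_i\in\Nn$ and $a_i<b_i$ are rational, is a network for $C_p(X)$. Given $f\in C_p(X)$ and a basic neighborhood determined by points $x_1,\dots,x_n$ and $\varepsilon>0$, continuity of $f$ at each $x_i$ together with the network property of $\Nn$ yields $N_i\in\Nn$ with $x_i\in N_i$ and $f(N_i)\subseteq(a_i,b_i)$ for a rational interval $(a_i,b_i)$ around $f(x_i)$ of length below $2\varepsilon$; the associated $W$ then satisfies $f\in W$ and lies inside the given neighborhood. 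Since $C_p(X)\subseteq\IR^X$ is Tychonoff, it is cosmic. For the converse $(3)\Rightarrow(1)$, the canonical evaluation map $\delta\colon X\to C_p(C_p(X))$, $\delta(x)(f)=f(x)$, is a topological embedding for Tychonoff $X$; applying the just-proved $(1)\Rightarrow(3)$ to the cosmic space $C_p(X)$ shows that $C_p(C_p(X))$ is cosmic, and hereditariness of the cosmic class makes its subspace $X$ cosmic.

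The hard part will be $(1)\Rightarrow(2)$, realizing a cosmic $X$ as a continuous image of a separable metrizable space. After enlarging $\Nn$ to its (still countable) closure under finite intersections, I would take $M\subseteq\w^\w$ to be the set of all $\sigma$ for which $N_{\sigma(0)}\supseteq N_{\sigma(1)}\supseteq\cdots$ is a decreasing chain of members of $\Nn$ whose intersection is a single point $y_\sigma$ that they neighborhood-base, and set $p(\sigma)=y_\sigma$. Surjectivity holds because at every $y\in X$ the members of $\Nn$ containing $y$ form a neighborhood base (the network property), and closure under finite intersections turns this into a decreasing cofinal chain whose intersection is $\{y\}$ by Hausdorffness. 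Continuity of $p$ is the key point and is clean: given $\sigma\in M$ and open $U\ni y_\sigma$, choose $k$ with $N_{\sigma(k)}\subseteq U$; then every $\tau\in M$ agreeing with $\sigma$ on the first $k+1$ coordinates satisfies $y_\tau\in N_{\tau(k)}=N_{\sigma(k)}\subseteq U$, so $p$ maps a basic neighborhood of $\sigma$ into $U$. As a subspace of $\w^\w$, $M$ is separable metrizable, and $p\colon M\to X$ is the desired continuous surjection.
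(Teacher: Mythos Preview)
The paper does not actually prove this statement: it is recorded as a \emph{Fact} attributed to Michael \cite{Mich} and then used as a black box in the proof of Theorem~\ref{t:Cp-M0-M0}. So there is no ``paper's own proof'' to compare against.

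That said, your argument is correct and follows the standard route one finds in Michael's paper and in the generalized-metric-spaces literature. A couple of cosmetic remarks. In $(1)\Rightarrow(3)$, with your bookkeeping you only obtain $|g(x_i)-f(x_i)|<b_i-a_i<2\varepsilon$, not $<\varepsilon$; this is fixed instantly by starting the argument with $\varepsilon/2$ or by choosing the rational interval of length $<\varepsilon$ rather than $<2\varepsilon$. In $(1)\Rightarrow(2)$, the phrase ``the members of $\Nn$ containing $y$ form a neighborhood base'' is a slight abuse (network elements need not be neighborhoods), but what you actually use---that every open $U\ni y$ contains some $N\in\Nn$ with $y\in N$---is exactly the network property, and your decreasing-chain construction in $\w^\w$ with $p(\sigma)=y_\sigma$ is the standard realization of a cosmic space as a continuous image of a separable metrizable space. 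Your use of the embedding $\delta:X\hookrightarrow C_p(C_p(X))$ for $(3)\Rightarrow(1)$ is precisely the device the paper itself records as Fact~\ref{f:CpCp(X)} and exploits in the proof of Theorem~\ref{t:Cp-M0-M0}.
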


For a Tychonoff space $X$, we denote by $\delta:X\to C_p(C_p(X))$ the canonical map assigning to each point $x\in X$ the Dirac measure $\delta_x$ concentrated at $x$. The {\em Dirac measure} $\delta_x:C_p(X)\to\IR$ assigns to each function $f\in C_p(X)$ its value $f(x)$ at the point $x$. The following important fact is well-known and can be found in  \cite[0.5.5]{Arhan}.

\begin{fact} \label{f:CpCp(X)}
For any Tychonoff space $X$, the canonical map $\delta:X\to C_p(C_p(X))$  is a topological embedding.
\end{fact}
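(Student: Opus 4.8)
The plan is to verify directly, from the definition of the topology of pointwise convergence, that $\delta\colon X\to C_p(C_p(X))$, $x\mapsto\delta_x$ with $\delta_x(f)=f(x)$, is injective, continuous, and open onto its image; these three properties together say precisely that $\delta$ is a topological embedding. As a preliminary step I would note that $\delta$ really does take values in $C_p(C_p(X))$: the topology of $C_p(X)$ is the one induced from the Tychonoff product $\IR^X$, and $\delta_x$ is merely the restriction to $C_p(X)$ of the $x$-th coordinate projection $\IR^X\to\IR$, hence continuous.

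The heart of the argument is the behaviour of $\delta$ on subbasic open sets. For $f\in C_p(X)$ and open $U\subseteq\IR$ put $[f;U]=\{\varphi\in C_p(C_p(X)):\varphi(f)\in U\}$. Then $\delta^{-1}([f;U])=\{x\in X:f(x)\in U\}=f^{-1}(U)$, which is open in $X$; since the sets $[f;U]$ form a subbase of $C_p(C_p(X))$, this gives continuity of $\delta$. The very same computation yields the complementary identity $\delta(f^{-1}(U))=\delta(X)\cap[f;U]$, i.e. $\delta$ sends every preimage $f^{-1}(U)$ onto a relatively open subset of $\delta(X)$.

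It remains to invoke the Tychonoff hypothesis, which I expect to be the only point needing any care: it enters twice, and without it the statement fails, since then $\delta$ need not even be injective. Injectivity: if $x\ne y$, then since $X$ is Tychonoff there is $f\in C(X)$ with $f(x)\ne f(y)$, so $\delta_x(f)\ne\delta_y(f)$. Openness onto the image: in a Tychonoff space the sets $f^{-1}(U)$ with $f\in C(X)$ and $U\subseteq\IR$ open form a base of the topology — given a point $x$ in an open set $V$, choose $f\in C(X,[0,1])$ with $f(x)=1$ and $f\equiv 0$ off $V$, so that $x\in f^{-1}((0,\infty))\subseteq V$ — and by the identity above $\delta$ carries each such basic open set onto a relatively open subset of $\delta(X)$; hence $\delta$ is open onto $\delta(X)$. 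A continuous open injection is a homeomorphism onto its image, so $\delta$ is a topological embedding. Beyond being explicit about these two uses of complete regularity, no genuine obstacle is expected — once the pointwise-convergence topology is unwound, everything is routine point-set topology.
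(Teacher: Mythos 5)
Your proof is correct: the computation $\delta^{-1}([f;U])=f^{-1}(U)$ together with the identity $\delta(f^{-1}(U))=\delta(X)\cap[f;U]$, plus the two uses of complete regularity (separating points for injectivity, cozero sets forming a base for openness onto the image), is exactly the standard argument. The paper does not prove this fact but only cites Arhangel'skii [0.5.5], and your argument is essentially the proof found there, so nothing further is needed.
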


Now we  present a proof of Theorem  \ref{t:Cp-M0-M0}.
\begin{proof}[Proof of Theorem  \ref{t:Cp-M0-M0}]
Let $X$ be a cosmic space. By Fact \ref{f:Cosmic}, the space $C_p(X)$ is the image of a separable metrizable space $M$ under a continuous map $\xi: M\to C_p(X)$. It follows that the dual map $\xi^\ast: C_p(C_p(X)) \to C_p(M)$, $\xi^\ast(f)=f\circ\xi$, is a topological embedding of $C_p(C_p(X))$ into $C_p(M)\in\cp^{\M_0\kern-1pt,\M_0}[\M_0]$. Now Fact \ref{f:CpCp(X)} implies that $X\in \cp^{\M_0\kern-1pt,\M_0}[\M_0]\subset  \cp^{\Top\kern-1pt,\M_0}[\M_0]$. Therefore, the class $\cp^{\M_0\kern-1pt,\M_0}[\M_0]$ contains all cosmic spaces.

To complete the proof of the theorem it is enough to show that the class $\mathfrak{C}$ of all cosmic spaces is $C_p^{\Top\kern-1pt,\M_0}$-stable. But this immediately follows from the stability of the class $\mathfrak{C}$ and Fact~\ref{f:Cosmic}.
\end{proof}

\section{Proof of Theorem~\ref{t:Cp-M0-Top}}

For topological spaces $X_1,\dots,X_n,Y,Z$, we denote by $SC_p( X_1\times\dots\times X_n,Y)$ the space of all separately continuous functions $f:X_1\times\dots\times X_n\to Y$ endowed with the topology of pointwise convergence. It is well-known that the function space $SC_p( X_1\times\dots\times X_n,Y)$ is  canonically homeomorphic  to  $C_p(X_1,SC_p(X_2\times\dots\times X_n,Y))$. In particular, $SC_p(X\times Y,Z)$ is canonically homeomorphic to $C_p(X,C_p(Y,Z))$. In the sequel the function space $SC_p(X_1\times\dots\times X_n,\IR)$ will be denoted by $SC_p(X_1\times\dots\times X_n)$. For $n=1$ the space $SC_p(X_1)$ coincides with $C_p(X_1)$.

We shall prove by induction that for every $n\in\IN$ and every space $X\in\M_0$ the function space $SC_p(X^n)$ belongs to the class $\cp^{\M_0,\Top}[\M_0]$. For $n=1$ the space $SC_p(X)=C_p(X)$ belongs to $\cp^{\M_0,\Top}[\M_0]$ by definition. Assume that for some number $n\in\IN$ we have proved that the space $SC_p(X^n)$ belongs to
$\cp^{\M_0,\Top}[\M_0]$. Taking into account that $SC_p(X^{n+1})$ is (canonically) homeomorphic to $C_p(X,SC_p(X^n))$, we conclude that  $SC_p(X^{n+1})\in \cp^{\M_0,\Top}[\M_0]$. Taking into account that the function space $SC_p(X^{<\w})$ is homeomorphic to $\prod_{n\in\IN}SC_p(X^n)$, we conclude that $SC_p(X^{<\w})\in \cp^{\M_0,\Top}[\M_0]$ and hence $SC_p(\M_0^{<\w})\subset \cp^{\M_0,\Top}[\M_0]$.

 To prove the reverse inclusion it is enough to check that the class $\scp\left(\MN_0\right)$ is closed under taking countable topological sums, countable Tychonoff products and taking function spaces with metrizable separable domain.
\smallskip

To see that the class $\scp\left(\MN_0\right)$ is closed under taking countable topological sums, it is enough to prove that for any non-empty spaces $X_n\in\M_0$, $n\in\w$, the topological sum $\bigoplus_{n\in\w}SC_p(\XN_n)$ embeds into the function space $SC_p(\XN)$ for some space $X\in\M_0$. Consider the topological sum $X=\bigoplus_{n\in\w}X_n$  and the topological embedding
\[
e:\textstyle{\bigoplus\limits_{n\in\w}}SC_p(\XN_n)\hookrightarrow SC_p(\XN)
\]
assigning to each function $f\in SC_p(\XN_n)$, $n\in\w$, the function $\tilde f\in SC_p(\XN)$ defined by
\[
\tilde f(x)=\begin{cases}f(x), &\mbox{if $x\in \XN_n$};\\
0 , & \mbox{if $x\in \XN\setminus\XN_n$.}
\end{cases}
\]
As $X\in\M_0$, we conclude that $ SC_p(\XN)$ and hence $\bigoplus_{n\in\w}SC_p(\XN_n)$ belong to the class $\scp\left(\MN_0\right)$.

Next we prove that for any metrizable space $X$ and any space $Y\in \scp\left(\MN_0\right)$ we get $C_p(X,Y)\in \scp\left(\MN_0\right)$.  The space $Y\in \scp\left(\MN_0\right)$ can be identified with a subspace of the function space $SC_p(Z^{<\w})$ for some separable metrizable space $Z$. Then
 the space $C_p(X,Y)$ can be identified with the subspace of $C_p(X,SC_p(Z^{<\w}))$, which is canonically homeomorphic to
\[
C_p\Big(X,\prod_{n\in\IN} SC_p (Z^n)\Big)= \prod_{n\in\NN} C_p(X,SC_p(Z^n))=\prod_{n\in\IN}SC_p(X\times Z^n).
\]
As $X\times Z^n$ is canonically homeomorphic to a retract $X\times\{x_0\}^{n-1}\times Z^n$ of $X^n\times Z^n =(X\times Z)^n$, we obtain
\[
C_p(X,SC_p(Z^{<\w}))=\prod_{n\in\NN} SC_p (X\times Z^n) \hookrightarrow \prod_{n\in\NN} SC_p ((X\times Z)^n)= SC_p \big((X\times Z)^{<\w}\big).
\]
Since  $X\times Z\in\M_0$, we finally get $C_p(X,Y) \hookrightarrow SC_p \left((X\times Z)^{<\w}\right)\in \scp\left(\MN_0\right)$.

Finally we show that the class $\scp\left(\MN_0\right)$ is closed under taking countable Tychonoff products. Fix any spaces $X_n\in \scp\left(\MN_0\right)$, $n\in\w$. As the class $\scp\left(\MN_0\right)$ is closed under taking countable topological sums, the topological sum $X=\bigoplus_{n\in\w}X_n$ belongs to the class $\scp\left(\MN_0\right)$. Since the class $\scp\left(\MN_0\right)$ is closed also under taking function spaces with separable metrizable domain, the function space $C_p(\w,X)$ belongs to the class $\scp\left(\MN_0\right)$. Taking into account that $\prod_{n\in\w}X_n\subset X^\w=C_p(\w,X)\in \scp\left(\MN_0\right)$, we conclude that $\prod_{n\in\w}X_n\in \scp\left(\MN_0\right)$.
\smallskip

\section{Proof of Theorem~\ref{t:lower}}

We divide the proof of Theorem~\ref{t:lower} into three lemmas.
The first of them follows from Theorem~\ref{t:Cp-M0-M0} and the obvious inclusion
$\cp^{\M_0,\M_0}[\M_0]\subset\cp^{\M_0,\Top}[\M_0]$.

\begin{lemma} \label{p:Cp-Cosmic}
The class $\cp^{\M_0\kern-1pt,\Top}[\M_0]$ contains all cosmic spaces.
\end{lemma}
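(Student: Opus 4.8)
The plan is to deduce the lemma formally from Theorem~\ref{t:Cp-M0-M0} together with an elementary monotonicity property of the $C_p$-stable closure in its second superscript.

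First I would record the following observation: if $\Y\subset\Y'$, then every $C_p^{\X,\Y'}$-stable class $\Z$ is automatically $C_p^{\X,\Y}$-stable, because the closure condition imposed by $C_p^{\X,\Y}$-stability (namely $C_p(X,Y)\in\Z$ for all $X\in\Z\cap\X$ and $Y\in\Z\cap\Y$) is just a special case of the one imposed by $C_p^{\X,\Y'}$-stability, the latter quantifying over the larger set $\Z\cap\Y'\supset\Z\cap\Y$. Applying this with $\X=\M_0$, $\Y=\M_0$ and $\Y'=\Top$, we see that $\cp^{\M_0,\Top}[\M_0]$, which by definition is a $C_p^{\M_0,\Top}$-stable extension of $\M_0$, is in particular a $C_p^{\M_0,\M_0}$-stable extension of $\M_0$; by minimality of $\cp^{\M_0,\M_0}[\M_0]$ among all such extensions we obtain
\[
\cp^{\M_0,\M_0}[\M_0]\subset\cp^{\M_0,\Top}[\M_0].
\]

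Finally I would invoke Theorem~\ref{t:Cp-M0-M0}, which identifies $\cp^{\M_0,\M_0}[\M_0]$ with the class $\mathfrak{C}$ of all cosmic spaces; combining this with the displayed inclusion gives $\mathfrak{C}\subset\cp^{\M_0,\Top}[\M_0]$, as claimed. The argument is purely formal and I expect no genuine obstacle; the only point deserving care is the direction of the monotonicity inclusion — enlarging the target class $\Y$ \emph{weakens} the $C_p^{\X,\Y}$-stability requirement and hence \emph{enlarges} the corresponding closure. (Alternatively one could argue directly, imitating the proof of Theorem~\ref{t:Cp-M0-M0}: for a cosmic space $X$, Fact~\ref{f:Cosmic} writes $C_p(X)$ as a continuous image of some $M\in\M_0$, so the dual map embeds $C_p(C_p(X))$ into $C_p(M)=C_p(M,\IR)\in\cp^{\M_0,\Top}[\M_0]$, and then Fact~\ref{f:CpCp(X)} embeds $X$ into $C_p(C_p(X))$; but this merely reproves what Theorem~\ref{t:Cp-M0-M0} already gives.)
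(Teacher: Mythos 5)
Your proof is correct and follows exactly the route the paper takes: the authors also deduce the lemma from Theorem~\ref{t:Cp-M0-M0} together with the (as they call it, ``obvious'') inclusion $\cp^{\M_0,\M_0}[\M_0]\subset\cp^{\M_0,\Top}[\M_0]$, which you have simply spelled out via the monotonicity of the closure in the second superscript. The direction of that monotonicity is handled correctly, so there is nothing to add.
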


The next lemma will play an important role also in the proof of Theorem~\ref{t:Cp-Top-Top}. In this lemma an ordered field is endowed with the topology generated by the linear order.

\begin{lemma}\label{lemma:SCp}
For every ordered field $F$, the space $SC_p(F\times F,F)\cong C_p(F,C_p(F,F))$ contains a discrete subspace $D$ of cardinality $|F|$.
\end{lemma}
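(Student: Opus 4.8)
The plan is to exhibit, inside $C_p(F,C_p(F,F))$, a family of separately continuous functions indexed by the points of $F$ that is discrete in the topology of pointwise convergence. The natural candidate is a "distance-to-the-diagonal" type construction: for each $a\in F$ I would like a separately continuous $f_a\colon F\times F\to F$ which detects the point $a$ on the diagonal, so that evaluating at finitely many points of $F\times F$ separates $f_a$ from all the other $f_b$. In an ordered field we have order, addition, and multiplication available, so a concrete choice is something like
\[
f_a(x,y)=\begin{cases}
0, & x\le a \text{ or } y\le a,\\
(x-a)(y-a), & x>a \text{ and } y>a,
\end{cases}
\]
or a variant thereof; each $f_a$ is continuous in each variable separately (for fixed $x$ it is either identically $0$ or a piecewise-polynomial continuous function of $y$, and symmetrically), hence $f_a\in SC_p(F\times F,F)\cong C_p(F,C_p(F,F))$.

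The key steps, in order, are: (1) verify separate continuity of each $f_a$ using only that the order topology on $F$ makes $+,\cdot$ and the half-lines well-behaved — here one uses that $F$ is a \emph{field}, so that for $x>a$ the map $y\mapsto (x-a)(y-a)$ is genuinely continuous and nonconstant; (2) for a fixed $a$, produce a point $p_a=(x_a,y_a)\in F\times F$ with $x_a,y_a>a$ such that $f_a(p_a)\neq 0$, while noting that $f_b(p_a)=0$ whenever $b\ge x_a$ or $b\ge y_a$; the delicate part is to arrange a single basic open neighbourhood of $f_a$ that excludes \emph{all} $f_b$ with $b\ne a$, including those $b$ slightly below $a$; (3) conclude that $D=\{f_a:a\in F\}$ is discrete and, since $a\mapsto f_a$ is injective (the functions $f_a$ have distinct "zero sets"), that $|D|=|F|$.

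I expect step (2) to be the main obstacle: a naive choice of $f_a$ will have $f_a$ and $f_b$ agree on a large region when $b$ is close to $a$, so a finite set of test points need not separate them uniformly. The fix I anticipate is to pick, for each $a$, a test tuple that pins down $a$ \emph{two-sidedly} — e.g. evaluate at a point just above $a$ in each coordinate and use a second function in the pair $C_p(F,C_p(F,F))$ (or a second test point) to certify that $a$ is not smaller, exploiting that one can compare $(x-a)(y-a)$ at two nearby points to recover $a$ up to the constraints imposed by the field structure. Concretely, for each $a$ choose $x_a>a$ and set the neighbourhood of $f_a$ to be $\{g: g(x_a,x_a)=f_a(x_a,x_a),\ g(\tfrac{a+x_a}{2},\tfrac{a+x_a}{2})=f_a(\tfrac{a+x_a}{2},\tfrac{a+x_a}{2})\}$ (using that $2$ is invertible in $F$); two values of the polynomial $(t-a)^2$ determine $a$, so no $f_b$ with $b\ne a$ can lie in this neighbourhood, while the $x_a$ may be chosen so that these neighbourhoods witness discreteness of the whole family. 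Once this separation is in hand, the cardinality claim $|D|=|F|$ is immediate.
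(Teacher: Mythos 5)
There is a genuine gap, and it is fatal to the specific family you construct. Your functions $f_a$ are indeed separately continuous, but the set $D=\{f_a:a\in F\}$ cannot be discrete, for the following reason. For every fixed test point $(x,y)$ the map $a\mapsto f_a(x,y)$ is continuous on $F$: for $a<\min\{x,y\}$ it equals the polynomial $(x-a)(y-a)$, for $a\ge\min\{x,y\}$ it equals $0$, and the two pieces match in the limit as $a\to\min\{x,y\}^{-}$ because $(x-a)(y-a)\to 0$ there. Hence $a\mapsto f_a$ is a continuous injection of $F$ into $F^{F\times F}$ with the product topology, i.e.\ into $SC_p(F\times F,F)$. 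Since the order topology of an ordered field has no isolated points (between any $a<b$ lies $(a+b)/2$), a continuous injective image of $F$ can never be discrete: if it were, each singleton $\{a\}=$ (preimage of $\{f_a\}$) would be open in $F$. This also explains why your step (2) cannot be repaired within this construction: basic neighbourhoods in the pointwise topology prescribe the values at finitely many points only up to an open set of $F$ (your ``$g(p)=f_a(p)$'' conditions are not open unless $F$ is discrete), and by the continuity just noted, every such neighbourhood of $f_a$ captures all $f_b$ with $b$ in some punctured interval around $a$. The slogan ``two values of $(t-a)^2$ determine $a$'' is true algebraically but irrelevant topologically.

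The missing idea is that the generating function must have a genuine, non-removable \emph{joint} discontinuity, so that the assignment $a\mapsto f_a$ is itself badly discontinuous. The paper takes $\sep^F(x,y)=\frac{2xy}{x^2+y^2}$ for $(x,y)\ne(0,0)$ and $\sep^F(0,0)=0$, and puts $D=\{\sep^F_{a,a}:a\in F\}$ where $\sep^F_{a,b}(x,y)=\sep^F(x-a,y-b)$. The point is that $\sep^F_{a,a}(a,a)=0$ while $\sep^F_{b,b}(a,a)=\frac{2(a-b)^2}{2(a-b)^2}=1$ for every $b\ne a$, so the single basic open set $\{g:\ g(a,a)<1/2\}$ already isolates $\sep^F_{a,a}$ inside $D$; no two-point test or two-sided pinning is needed. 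Your overall architecture (shifted copies of one separately continuous function, separated by evaluation at a diagonal point) is exactly the paper's, but the choice of a function glued continuously from polynomials destroys the discreteness you need.
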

\begin{proof}
Consider the discontinuous separately continuous function $\sep^F :F\times F\to F$ defined by
\[
\sep^F(x,y)=\begin{cases}
0, &\mbox{if $(x,y)=(0,0)$},\\
\frac{2xy}{x^2+y^2}, &\mbox{otherwise}.
\end{cases}
\]
For every $a,b\in F$, consider the shifted function $\sep^F_{a,b}:F\times F\to F$, $\sep^F_{a,b}(x,y):=\sep^F(x-a,y-b)$, and observe that the subspace $D=\{\sep^F_{a,a}:a\in F\}$ is discrete in the space $SC_p(F\times F,F)\cong C_p(F,C_p(F,F))$.
\end{proof}

\begin{lemma} \label{p:Cp-Metr}
 Each metrizable space $X$ of weight $\le\mathfrak c$ belongs to the class $\cp^{\M_0\kern-1pt,\Top}[\M_0]$.
\end{lemma}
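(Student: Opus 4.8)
The plan is to realize an arbitrary metrizable space $X$ of weight $\le\mathfrak c$ as a subspace of $SC_p(M^{<\w})$ for a suitable separable metrizable $M$, and then invoke Theorem~\ref{t:Cp-M0-Top} (which identifies $\cp^{\M_0,\Top}[\M_0]$ with $\scp(\M_0^{<\w})$). Since the field $\IR$ is an ordered field, Lemma~\ref{lemma:SCp} already tells us that $SC_p(\IR\times\IR)$ contains a discrete subspace of cardinality $\mathfrak c$; more importantly, the function $\sep:=\sep^\IR$ and its diagonal shifts $\sep_{a,a}$ give us a large supply of separately continuous functions on $\IR^2$ that ``separate points'' in a strong sense. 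The idea is to use these shifted copies of $\sep$, parametrized by a set $A\subset\IR$ of size $\le\mathfrak c$, as ``coordinates'' for embedding $X$.

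First I would fix a metric $d$ on $X$ with $w(X)=|X|^{?}$—more precisely, fix a dense subset; but the cleanest route is: choose an injection of a base (or of a dense subset together with rational radii) into $\IR$, and for each basic index $a\in A\subset\IR$ (with $|A|\le\mathfrak c$) build a separately continuous function $F_a\colon \IR\times X\to\IR$ (or on a finite power $(\IR\sqcup X)^{n}$) that records the ``$a$-th coordinate'' of a point of $X$, i.e. something like $x\mapsto d(x,x_a)$ composed with the pathology $\sep$ to force discreteness/separation. Then the evaluation map $X\to SC_p(M^{<\w})$, $x\mapsto(F_a(\cdot,x))_{a}$, should be a topological embedding once $M$ is taken to be $\IR\sqcup X_0$ for an appropriate separable metrizable $X_0$ carrying the needed metric information. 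The key technical points to verify are: (i) each $F_a$ is separately continuous on the relevant finite power; (ii) the assembled map is injective and continuous, which follows from continuity of the honest coordinates $d(\cdot,x_a)$; and (iii) it is an embedding, i.e. the original topology of $X$ is recovered—this is where the pathological function $\sep$, rather than a continuous one, is needed, because continuous functions on a product only see the ``cosmic part'' of $X$, whereas $\sep_{a,a}$ is discontinuous precisely along the diagonal and this discontinuity can be exploited to detect the full metric topology of a weight-$\mathfrak c$ space.

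The main obstacle I anticipate is point (iii): arranging that the separately continuous coordinate functions genuinely \emph{refine} the topology up to weight $\mathfrak c$, rather than only up to countable weight. The trick will be to let $X$ itself (or a discrete/metrizable space of size $\mathfrak c$ derived from it) play the role of one factor in the product $M=\IR\sqcup X_0$, so that the ``diagonal'' along which $\sep$ is discontinuous sits inside $X\times X$ and the family $\{\sep_{x_a,x_a}\}$ is indexed by points of $X$; separate continuity survives because $\sep$ is separately continuous, but the joint discontinuity along the diagonal lets the evaluation map see every point of $X$ as isolated-from-the-rest in exactly the way the metric prescribes. Once the embedding is built, membership in $\scp(\M_0^{<\w})=\cp^{\M_0,\Top}[\M_0]$ is immediate, and the proof is complete.
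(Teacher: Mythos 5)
There is a genuine gap. Your plan places $X$ (or a discrete set of size $\mathfrak c$ derived from it) inside the \emph{domain} of the separately continuous functions, taking $M=\IR\sqcup X_0$; but membership in $\scp(\M_0^{<\w})$ requires the domain to be a finite power of a \emph{separable} metrizable space, and a metrizable space of weight $\mathfrak c$ (or a discrete set of cardinality $\mathfrak c$) is not separable, so no admissible $M$ contains it. The difficulty cannot be patched by cleverer coordinates: for a separately continuous $F\colon M\times X\to\IR$ the assignment $x\mapsto F(\cdot,x)$ lands in $C_p(M)$, which is cosmic when $M\in\M_0$ (Fact~\ref{f:Cosmic}), so any family of such coordinates factors $X$ coordinatewise through cosmic spaces and --- exactly as you anticipate in your point (iii) --- can only recover the ``cosmic part'' of $X$. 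Moreover, your evaluation map $x\mapsto(F_a(\cdot,x))_{a\in A}$ with $|A|=\mathfrak c$ lands in a product of $\mathfrak c$ many function spaces, not in $SC_p(M^{<\w})$, which is only a countable product of the spaces $SC_p(M^n)$.

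The resolution --- and the paper's route --- is that the largeness must live in the function space \emph{over a separable domain}, not in the domain itself. Lemma~\ref{lemma:SCp} already gives the discrete set $D=\{\sep^\IR_{a,a}:a\in\IR\}$ of cardinality $\mathfrak c$ inside $SC_p(\IR\times\IR)$; the slightly larger set $H=\{t\cdot \sep^\IR_{a,a}:a\in\IR,\ t\in[0,1]\}$ is homeomorphic to the hedgehog with $\mathfrak c$ spines. Since $SC_p(\IR\times\IR)\cong C_p(\IR,C_p(\IR))$ belongs to $\cp^{\M_0\kern-1pt,\Top}[\M_0]$ and this class is closed under subspaces and countable products, $H^\w$ belongs to it as well; by Kowalsky's embedding theorem \cite[4.4.9]{Eng}, every metrizable space of weight $\le\mathfrak c$ embeds into $H^\w$, which finishes the proof. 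You had the essential ingredients (the shifts $\sep_{a,a}$ and the discreteness from Lemma~\ref{lemma:SCp}); the missing idea is to recognize the hedgehog inside $SC_p(\IR\times\IR)$ and invoke the classical universality of $H^\w$ for metrizable spaces of weight $\le\mathfrak c$, rather than building an ad hoc embedding whose domain would have to contain $X$.
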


\begin{proof}
Let $sp:=\sep^\IR :\IR\times\IR\to \IR$ be the classical discontinuous separately continuous function. By Lemma \ref{lemma:SCp},  the subspace $D=\{sp_{a,a}:a\in\IR\}$ is discrete in the space $SC_p(\IR\times\IR,\IR)$. Moreover, the subspace $H=\{t\cdot sp_{a,a}:a\in\IR,\;t\in[0,1]\}$ of $SC_p(\IR\times\IR,\IR)$ is homeomorphic to a hedgehog with $\mathfrak c$ spines. It follows from $SC_p(\IR\times\IR,\IR)\cong C_p(\IR,C_p(\IR))\in \cp^{\M_0\kern-1pt,\Top}[\M_0]$ that $H\in \cp^{\M_0\kern-1pt,\Top}[\M_0]$, and hence $H^\w\in \cp^{\M_0\kern-1pt,\Top}[\M_0]$. By \cite[4.4.9]{Eng}, each metrizable space of weight $\leq\mathfrak{c}$ embeds into $H^\w$. Consequently, the class $\cp^{\M_0\kern-1pt,\Top}[\M_0]$ contains all metrizable spaces of weight $\leq\mathfrak{c}$.
\end{proof}

\begin{lemma}
The class $\cp^{\M_0,\Top}[\M_0]$ contains all generalized ordered spaces of countable linear weight.
\end{lemma}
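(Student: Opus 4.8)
The plan is to put the statement into a normal form and then to realise the ``generalised ordered'' part of the topology by suitably truncated copies of the separately continuous function $sp=\sep^\IR$ of Lemma~\ref{lemma:SCp}.

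\smallskip
\emph{Step 1 (normal form).} Let $X$ be a $GO$-space with order $\leq$ whose order topology $\tau_{\leq}$ is second countable, and let $\tau\supseteq\tau_{\leq}$ be its topology. First I would invoke the classical fact that a linearly ordered topological space is second countable if and only if it embeds into $\IR$ at the same time as a topological space and as an ordered set; this yields a strictly increasing homeomorphism $\phi\colon (X,\tau_{\leq})\to\phi(X)\subseteq\IR$. Put $R=\{x\in X:[x,\rightarrow)_X\in\tau\}$ and $L=\{x\in X:(\leftarrow,x]_X\in\tau\}$. Inspecting order-convex basic neighbourhoods one checks that $\tau$ is generated by $\tau_{\leq}\cup\{[x,\rightarrow)_X:x\in R\}\cup\{(\leftarrow,x]_X:x\in L\}$. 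Note that $\phi(R)$ and $\phi(L)$, with the subspace topology of $\IR$, belong to $\M_0$.

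\smallskip
\emph{Step 2 (the embedding).} Define $sp^{+},sp^{-}\colon\IR\times\IR\to\IR$ by $sp^{+}=sp$ on $\{\max\{x,y\}>0\}$ and $sp^{+}=0$ elsewhere, and $sp^{-}=sp$ on $\{\min\{x,y\}<0\}$ and $sp^{-}=0$ elsewhere. As with $sp$ itself one checks that $sp^{\pm}$ are separately continuous, continuous off the origin, and satisfy $sp^{+}(t,t)=1$ for $t>0$ and $sp^{+}(t,t)=0$ for $t\leq 0$, while $sp^{-}(t,t)=1$ for $t<0$ and $sp^{-}(t,t)=0$ for $t\geq 0$. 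Consider
\[
\Theta\colon X\longrightarrow \IR\times SC_p\big(\phi(R)\times\phi(R)\big)\times SC_p\big(\phi(L)\times\phi(L)\big),\qquad
\Theta(x)=\big(\phi(x),\,f^{+}_x,\,f^{-}_x\big),
\]
where $f^{\pm}_x(s,t)=sp^{\pm}(s-\phi(x),t-\phi(x))$; each $f^{\pm}_x$ is separately continuous on $\phi(R)^2$ (resp. $\phi(L)^2$) because $sp^{\pm}$ is separately continuous and vanishes whenever one of its arguments is $0$. I would then verify that $\Theta$ is a topological embedding: $\phi$ is $\tau_{\leq}$- hence $\tau$-continuous; for $c\neq c'$ the functional $x\mapsto sp^{\pm}(c-\phi(x),c'-\phi(x))$ is a composition along a path missing the origin, hence $\tau_{\leq}$-continuous; and for $c=\phi(x_{0})$ with $x_{0}\in R$ one has $x\mapsto sp^{+}(c-\phi(x),c-\phi(x))=\chi_{(\leftarrow,x_{0})_X}(x)$, a $\{0,1\}$-valued function whose $0$-fibre is $[x_{0},\rightarrow)_X\in\tau$; symmetrically with $sp^{-}$ and $L$. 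Thus $\Theta$ is continuous, it is injective via $\phi$, and the topology it induces on $X$ contains $\tau_{\leq}$ together with all the sets $[x_{0},\rightarrow)_X$ ($x_{0}\in R$) and $(\leftarrow,x_{0}]_X$ ($x_{0}\in L$), so this induced topology is exactly $\tau$.

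\smallskip
\emph{Step 3 (conclusion).} For $Z\in\M_0$ we have $Z\in\cp^{\M_0,\Top}[\M_0]$, hence $C_p(Z)\in\cp^{\M_0,\Top}[\M_0]$ and $SC_p(Z\times Z)\cong C_p(Z,C_p(Z))\in\cp^{\M_0,\Top}[\M_0]$ by the $C_p^{\M_0,\Top}$-stability of this class. Therefore the range of $\Theta$ is a finite product of members of $\cp^{\M_0,\Top}[\M_0]$, and so $X\in\cp^{\M_0,\Top}[\M_0]$.

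\smallskip
\emph{Main obstacle.} The crux is the choice of the domains $\phi(R),\phi(L)$ of the two function-space factors. If one used $\IR$ as the domain, the evaluations along the whole diagonal would make $[x,\rightarrow)_X$ open for \emph{every} $x$, producing a Sorgenfrey-type topology strictly finer than $\tau$; and one cannot suppress the unwanted diagonal evaluations by multiplying $sp$ by a jointly continuous cut-off, since a continuous function has open support whereas $R$ may be an arbitrary, even non-Borel, subset of $X$. Using $\phi(R)$ itself as the domain makes the diagonal of $\phi(R)\times\phi(R)$ meet exactly the points whose right half-line must become open. A secondary technical point lies in Step 1: one needs that the underlying $LOTS$ embeds into $\IR$ simultaneously as a homeomorphism \emph{and} as an order-isomorphism, not merely as one or the other.
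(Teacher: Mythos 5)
Your proof is correct and follows essentially the same route as the paper's: embed the order topology into $\IR$, generate $\tau$ by $\tau_{\leq}$ together with the open half-lines at points of $R$ and $L$, and realize those extra open sets via diagonal evaluations of shifted, truncated copies of $\sep^{\IR}$ inside $SC_p(\phi(R)\times\phi(R))$ and $SC_p(\phi(L)\times\phi(L))$, exactly as the paper does with its functions $R_a$ and $L_a$ on $\tilde X_r\times\tilde X_r$ and $\tilde X_\ell\times\tilde X_\ell$. The only differences are cosmetic (your truncations $sp^{\pm}$ differ from the paper's off the diagonal, and the paper first extends the $GO$-topology to all of $\IR$ before embedding), and your closing discussion of why the domains must be $\phi(R),\phi(L)$ rather than $\IR$ correctly identifies the key point of the construction.
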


\begin{proof} Let $(X,\le,\tau)$ be a generalized ordered space with countable linear weight. Consider the following two subsets of $X$:
\[
X_\ell=\{x\in X:\mbox{$(\leftarrow,x]$ is open in $X$}\},\mbox{ and }
X_r=\{x\in X:\mbox{$[x,\rightarrow)$ is open in $X$}\}.
\]
Let $\tau_{\leq}$ be the topology on $X$ generated by the linear order.
By our assumption, the topology $\tau_{\leq}$ on $X$ is second countable, so, by \cite[6.3.2(c)]{Eng},  the linearly ordered space $(X,\tau_\leq)$ is order homeomorphic to a subspace of the real line $\IR$. Therefore we can assume that $X\subset \IR$ and the topology $\tau$ on $X$ is generated by the subbase
\[
\tau_\leq\cup\big\{(\leftarrow, x]:x\in X_\ell\big\}\cup\big\{[x,\rightarrow):x\in X_r\big\}.
\]
Denote by $\tilde X_\ell$ and $\tilde X_r$ the sets $X_\ell$ and $X_r$ respectively endowed with the  separable metrizable topology inherited from the real line $\IR$.

Besides the Euclidean topology $\tilde\tau_{\leq}$ generated by the linear order on the real line $\IR$, we shall consider the following three $GO$-topologies on $\IR$:
\begin{itemize}
\item the topology $\tilde\tau_\ell$ generated by the subbase $\tilde\tau_{\le}\cup\{(-\infty,x]:x\in X_\ell\}$;
\item the topology $\tilde\tau_r$ generated by the subbase $\tilde\tau_{\le}\cup\{[x,+\infty):x\in X_r\}$.
\item the topology $\tilde\tau$ generated by the subbase $\tilde\tau_\ell\cup\tilde\tau_r$.
\end{itemize}
It is easy to see that the identity map $\id:(X,\tau)\to(\IR,\tilde\tau)$ is a topological embedding. So, it suffices to show that $(\IR,\tilde\tau) \in SC_p(\M_0^{<\w})$.

For this purpose we consider the following two separately continuous functions $L,R:\IR\times\IR\to\IR$ defined by the formulas
\[
L(x,y):=\begin{cases}\frac{2xy}{x^2+y^2}&\mbox{if $x,y<0$},\\
0,&\mbox{otherwise,}
\end{cases}
\quad\mbox{
and
}\quad
R(x,y):=\begin{cases}\frac{2xy}{x^2+y^2}&\mbox{if $x,y>0$},\\
0,&\mbox{otherwise}.
\end{cases}
\]
For every $a\in\IR$, consider the shifted functions $L_a,R_a:\IR\times\IR\to\IR$ defined by
\[
L_a(x,y):=L(x-a,y-a) \; \mbox{ and } \; R_a(x,y):=R(x-a,y-a), \; \forall x,y\in\IR.
\]

Let $a\in\IR, \varepsilon>0$ and $(x,y)\in {\tilde X}_\ell\times {\tilde X}_\ell$. If $x<a$ and $y<a$, we can find $\delta >0$ such that
\begin{equation}\label{e:Cp-1}
|L_a(x,y)-L_b(x,y)|<\varepsilon, \quad \forall b\in(a-\delta,a+\delta)\in {\tilde\tau}_\leq .
\end{equation}
If $x>a$ or $y>a$, we can find $\delta >0$ such that
\begin{equation}\label{e:Cp-2}
L_a(x,y)=L_b(x,y)=0, \quad \forall b\in(a-\delta,a+\delta)\in {\tilde\tau}_\leq .
\end{equation}
Assume that $x=y=a$. Then $a\in {\tilde X}_\ell$ and 
\begin{equation}\label{e:Cp-3}
L_a(x,y)=L_b(x,y)=0, \quad \forall b\in(-\infty,a]\in {\tilde\tau}_\ell .
\end{equation}
Since  $SC_p({\tilde X}_\ell\times {\tilde X}_\ell)$ carries the topology of pointwise convergence, (\ref{e:Cp-1})--(\ref{e:Cp-3}) imply that the map
\[
\mathcal{L}:(\IR,{\tilde\tau}_\ell)  \to SC_p(\tilde X_\ell\times \tilde X_\ell),\quad \mathcal{L}:a\mapsto L_{a}|_{\tilde X_\ell\times \tilde X_\ell},
\]
is continuous.
Note also that if $a\in {\tilde X}_\ell$ and $b>a$, then $L_b(a,a)=1$ and (see (\ref{e:Cp-3}))
\begin{equation}\label{e:Cp-4}
\mathcal{L}\big((-\infty,a]\big) =\mathcal{L}(\IR) \cap U,
\end{equation}
where  $U=\big\{ f\in SC_p({\tilde X}_\ell\times {\tilde X}_\ell): |f(a,a)|< 1/2 \big\}$ is open in $SC_p({\tilde X}_\ell\times {\tilde X}_\ell)$.
\smallskip

Analogously it can be shown that the function
\[
\mathcal{R}:(\IR,\tilde\tau_r) \to SC_p(\tilde X_r\times \tilde X_r),\quad \mathcal{R}:b\mapsto R_{b}|_{\tilde X_r\times \tilde X_r},
\]
is continuous, and  if $b\in {\tilde X}_r$ and $c<b$, then $R_c(b,b)=1$ and
\begin{equation}\label{e:Cp-5}
\mathcal{R}\big( [b,+\infty)\big) =\mathcal{R}(\IR) \cap V,
\end{equation}
where  $V=\big\{ f\in SC_p({\tilde X}_r\times {\tilde X}_r): |f(b,b)|< 1/2 \big\}$ is open in $SC_p({\tilde X}_r\times {\tilde X}_r)$.

Set $T:= \IR\times SC_p(\tilde X_\ell\times \tilde X_\ell)\times SC_p(\tilde X_r\times \tilde X_r)\in \scp(\M_0^{<\w})$ and define the following map
\[
\mathcal{S}:=(\id,\mathcal{L},\mathcal{R}):(\IR,\tilde\tau) \to T.
\]
Clearly, the map $\mathcal{S}$ is continuous and injective. If $(a,b)\in {\tilde\tau}_\leq $, then
\begin{equation}\label{e:Cp-6}
\mathcal{S}\big( (a,b)\big) =\mathcal{S}(\IR) \cap \big[ (a,b)\times SC_p(\tilde X_\ell\times \tilde X_\ell)\times SC_p(\tilde X_r\times \tilde X_r)\big] .
\end{equation}
If $a\in X_\ell$ and $b\in X_r$, then (\ref{e:Cp-4}) and (\ref{e:Cp-5}) imply
\begin{equation}\label{e:Cp-7}
\mathcal{S}\big( (-\infty,a]\big) =\mathcal{S}(\IR) \cap \big[ \IR \times U\times SC_p(\tilde X_r\times \tilde X_r)\big]
\end{equation}
and
\begin{equation}\label{e:Cp-8}
\mathcal{S}\big( [b,+\infty)\big)=\mathcal{S}(\IR)\cap\big[ \IR\times SC_p(\tilde X_\ell\times \tilde X_\ell)\times V\big] .
\end{equation}
The equalities (\ref{e:Cp-6})--(\ref{e:Cp-8}) imply that the map $\mathcal{S}$  is a topological embedding of the $GO$-space $(\IR,\tilde \tau)$ into the space $T\in \scp(\M_0^{<\w})$, which implies that the $GO$-space $(\IR,\tilde\tau)$ and its subspace $(X,\tau)$  both belong to the class $\scp(\M_0^{<\w})$.
\end{proof}

\section{Proof of Theorem~\ref{t:Cp(M)-iw}}\label{s:up}

Fix any space $X\in \cp^{\M_0\kern-1pt,\Top}[\M_0]$.
By Theorem~\ref{t:Cp-M0-Top}, the space $X$ embeds into the function space $SC_p(Z^{<\w})$ for some separable metrizable space $Z$. Then the space $X\subset SC_p(Z^{<\w})\subset\IR^{Z^{<\w}}$ has weight
$$w(X)\le w(SC_p(Z^{<\w}))\le w(\IR^{Z^{<\w}})\le |Z^{<\w}|\le \mathfrak c.$$

To see that $iw(X)\le\mathfrak c$, choose a countable dense subset $D$ of $Z$ and observe that the restriction operator
\[
R:SC_p(Z^{<\w})\to SC_p(\DN)\subset\IR^{\DN}, \; R:f\mapsto f|_{\DN},
\]
is continuous.

Let us show that $R$ is also injective. Since $R$ is a linear operator, it is enough to show that a function $f\in SC_p(X^n)$ equals zero if $f|_{D^n} =0$. Since $f$ is separately continuous and $D$ is dense in $X$, we have
\[
f|_{X\times D^{n-1}} =f|_{X^2\times D^{n-2}}=\dots = f|_{X^n} =0.
\]
Therefore, the operator $R:X\to \IR^{D^{<\w}}$ is continuous and injective, which implies that
$iw(X)\le w(\IR^{\DN})\le|\DN|\le\aleph_0$ and $|X|\le|\IR^{D^{<\w}}|\le|\IR^\w|=\mathfrak c$.

\section{Proof of Theorem~\ref{t:Cp-Top-Top}}\label{s6}

For a linearly ordered set $X$ by a {\em cut} of $L$ we understand a pair $(A,B)$ consisting of two subsets $A,B$ of $X$ such that $X=A\cup B$ and $a<b$ for any $a\in A$ and $b\in B$. Each point $x\in X$ determines two cuts $x_-=\big((\leftarrow,x),[x,\rightarrow)\big)$ and $x_+=\big((\leftarrow,x],(x,\rightarrow)\big)$ of $X$. The set of all cuts of $X$ will be denoted by $\check X$. The set $\check X$ is linearly ordered by the relation $(A,B)\le (C,D)$ if for each $a\in A$ there is $c\in C$ such that $a\le c$. It is easy to see that the set $X^\pm=\{x_-,x_+:x\in X\}$ is order dense in the linearly ordered set $\check X$.

To each linearly ordered set $L$ we can assign the {\em ordered  group} $H(L)$ consisting of all functions $f:L\to\IQ$ with finite support $\supp(f)=\{x\in L:f(x)\ne 0\}$ and called the {\em Hahn group} (see \cite[1.25]{DW}, where this group is denoted by $\mathfrak F_0(\IQ,L)$). The order on the Hahn group $H(L)$ is defined by letting $f>0$ iff $f\big(\min (\supp(f))\big)>0$. The Hahn group $H^2(L):=H(H(L))$ over the Hahn group $H(L)$ has the structure of an ordered field. For two functions $f,g:H(L)\to\IQ$ in $H^2(\IQ)$ their product in $H^2(L)$ is defined as the convolution $f*g:z\mapsto \sum_{x+y=z}f(x)g(y)$.  By  Theorem~2.15 of \cite{DW}, $H^2(L)$ is a real-closed ordered field of cardinality $\max\{|L|,\aleph_0\}$.

For any (order dense) subspace $D\subset L$ we can identify the Hahn group $H(D)$ with the (order dense) subgroup $\{f\in H(L):\supp(f)\subset D\}$ of $H(L)$ and the field $H^2(D)$ with the (order dense) subfield $\{f\in H^2(L):\supp(f)\subset H(D)\subset H(L)\}$ of the field $H^2(L)$.

The proof of Theorem~\ref{t:Cp-Top-Top} is based on a recent deep result of Chernikov and Shelah \cite[Corollary 2.11]{CS}.

\begin{fact}[Chernikov-Shelah]\label{f:cher-shel} For any infinite cardinal $\kappa$ there are seven linearly ordered spaces $L_0,\dots,L_6$ such that $|L_0|\le\kappa$, $| L_6|\ge 2^\kappa$ and $|L_{k+1}|\le|\check L_k|$ for every $k\in\{0,1,2,3,4,5\}$.
\end{fact}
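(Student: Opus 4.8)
The plan is to read the statement as a purely set-theoretic assertion about how fast the number of Dedekind cuts grows under iteration, and to reduce it to a single inequality for the cut-counting function. For an infinite cardinal $\lambda$ define
\[
\ded(\lambda)=\sup\{\,|\check L|:L\text{ is a linearly ordered set with }|L|\le\lambda\,\}.
\]
Two elementary bounds are immediate: a cut $(A,B)$ of $L$ is determined by its lower set $A\subseteq L$, whence $\ded(\lambda)\le 2^\lambda$; a standard argument shows $\lambda<\ded(\lambda)$; and $\ded$ is clearly monotone. Since we are free to take each $L_{k+1}$ to be \emph{any} linear order of cardinality at most $|\check L_k|$, the optimal strategy is to let $L_{k+1}$ be an order of size $\le|\check L_k|$ whose own cut set is as large as possible (the density of $X^\pm$ in $\check X$ noted above is what lets one bound cardinalities of orders by cut numbers of dense suborders). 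With this choice the seven-term requirement collapses to the single statement that the sixfold iterate $\ded^{(6)}(\kappa)$ dominates $2^\kappa$. The supremum defining $\ded$ need not be attained, but since $2^\kappa$ is a fixed cardinal and each $\ded(\lambda)>\lambda$, these attainment issues are routine and affect only the bookkeeping at the last step.

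Thus the heart of the matter is the ZFC inequality $2^\kappa\le\ded^{(6)}(\kappa)$. What makes this delicate, and the reason a single cut operation will \emph{not} do, is that $\ded(\kappa)<2^\kappa$ is consistent: it is consistent, for suitable $\kappa$, that $\ded(\kappa)=\kappa^+$ while $2^\kappa$ is much larger, so that no single order of size $\le\kappa$ has $2^\kappa$ cuts. The theorem asserts that nevertheless a fixed finite number of cut steps always closes this gap. The natural route is to pass to trees: up to the same supremum, $\ded(\lambda)$ is the largest possible number of branches of a tree with at most $\lambda$ nodes (cuts of a linear order correspond to branches of its tree of bounded initial segments), and branch counting composes in a controllable way under iteration.

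Concretely, I would (i) establish the tree reformulation of $\ded$; (ii) prove an absorption lemma showing that a bounded number of further cut steps suffices to realize countable powers, i.e. that $\lambda^{\aleph_0}$ is dominated by a bounded iterate of $\ded$ applied to $\lambda$; and (iii) iterate the absorption to climb from $\kappa$ through the intermediate cardinals up to $2^\kappa=|{}^{\kappa}2|$, the finite count $6$ arising as an explicit bound on how many absorption steps are needed. Finally I would translate the resulting chain back into concrete linear orders, taking the $L_k$ to be lexicographically ordered trees (or lexicographic products) realizing the successive branch counts, and verify directly the three required conditions $|L_0|\le\kappa$, $|L_{k+1}|\le|\check L_k|$ for $k\le 5$, and $|L_6|\ge 2^\kappa$.

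The main obstacle is exactly steps (ii)--(iii): controlling the gap between $\ded(\kappa)$ and $2^\kappa$ using only finitely many iterations. Monotonicity and the crude bounds $\kappa<\ded(\kappa)\le 2^\kappa$ give nothing here, so one genuinely needs the fine set-theoretic combinatorics of Chernikov and Shelah --- partition and tree arguments together with bounds on the cofinality of $\ded(\kappa)$ --- to guarantee that the iteration provably reaches $2^\kappa$ after a uniform finite number of cut operations.
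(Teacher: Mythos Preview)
The paper does not prove this statement at all: it is stated as a \emph{Fact} and attributed to Chernikov and Shelah \cite[Corollary~2.11]{CS}, with no argument given. So there is no ``paper's own proof'' to compare against.

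Your proposal is a correct reformulation --- the assertion is equivalent (modulo routine attainment bookkeeping) to the inequality $2^{\kappa}\le\ded^{(6)}(\kappa)$, and your remarks about trees, branches, and the consistency of $\ded(\kappa)<2^{\kappa}$ are accurate context. But your write-up is not a proof either: in the final paragraph you explicitly say that steps~(ii)--(iii), i.e.\ the actual bound on the iterated cut number, require ``the fine set-theoretic combinatorics of Chernikov and Shelah''. That is precisely the content of the cited result, so your argument is circular: it reduces the Fact to itself. If the intent was merely to unpack what the Fact says and why it is non-trivial, you have done that well; if the intent was to supply an independent proof, the substantive step is missing and you would need to reproduce the pcf/tree arguments of~\cite{CS} to fill it.
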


This result of Chernikov and Shelah will be applied in the proof of the following lemma.

\begin{lemma} \label{l:Cp-TT}
For every ordinal $\alpha<\w_1$, each Tychonoff space of weight $\leq\beth_\alpha$ belongs to the class $\cp[\M_0]$.
\end{lemma}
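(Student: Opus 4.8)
The plan is to prove Lemma~\ref{l:Cp-TT} by induction on the ordinal $\alpha<\w_1$. Fix a Tychonoff space $X$ of weight $\le\beth_\alpha$; by Fact~\ref{f:CpCp(X)} it embeds into $C_p(C_p(X))$, so it suffices to show that $C_p(C_p(X))\in\cp[\M_0]$; and since $\cp[\M_0]$ is stable (closed under subspaces and products), it is in fact enough to produce, for every space $Y\in\cp[\M_0]$ and every Tychonoff $X$ of weight $\le\beth_\alpha$, the membership $C_p(X,Y)\in\cp[\M_0]$, or more modestly to find \emph{some} linearly ordered space $L$ of a suitable size with $(L,\tau_\le)\in\cp[\M_0]$ into which the relevant function space embeds. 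The base case $\alpha=0$ is $w(X)\le\aleph_0$, i.e.\ $X$ is separable metrizable, hence already in $\M_0\subset\cp[\M_0]$. So assume $0<\alpha<\w_1$ and that the lemma holds for all $\beta<\alpha$. Writing $\alpha$ as the supremum of a countable increasing sequence (or noting $\beth_\alpha=\sup_{\beta<\alpha}\beth_\beta$ at limits, and $\beth_\alpha=2^{\beth_{\alpha-1}}$ at successors), the substantive content is the successor step: if every Tychonoff space of weight $\le\kappa:=\beth_{\alpha-1}$ lies in $\cp[\M_0]$, then so does every Tychonoff space of weight $\le 2^\kappa$.

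For the successor step I would invoke Fact~\ref{f:cher-shel} with this $\kappa$, obtaining linearly ordered spaces $L_0,\dots,L_6$ with $|L_0|\le\kappa$, $|L_6|\ge 2^\kappa$, and $|L_{k+1}|\le|\check L_k|$. The idea is that passing from a linear order $L$ to its cut completion $\check L$ is, up to taking a $GO$-subspace and one application of the $SC_p$-construction, an operation that stays inside $\cp[\M_0]$: Lemma~\ref{lemma:SCp} shows that for the real-closed ordered field $H^2(L)$ the space $SC_p(H^2(L)\times H^2(L),H^2(L))\cong C_p(H^2(L),C_p(H^2(L),H^2(L)))$ contains a discrete subspace of cardinality $|H^2(L)|=\max\{|L|,\aleph_0\}$. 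Concretely, starting from $L_0$ with $|L_0|\le\kappa$: by the inductive hypothesis every Tychonoff space of weight $\le\kappa$, in particular the ordered field $H^2(L_0)$ with its order topology, belongs to $\cp[\M_0]$; applying the $C_p$-construction twice keeps us in $\cp[\M_0]$ and, using the shifted split-functions $\sep^{F}_{a,a}$ together with the order-density of $X^\pm$ in $\check X$, one embeds (a $GO$-refinement of) the linearly ordered space $\check L_0$—hence $L_1$, since $|L_1|\le|\check L_0|$ and $L_1$ embeds order-isomorphically into a space of the right size—into a member of $\cp[\M_0]$, this time witnessing that all Tychonoff spaces of weight $\le|L_1|$ belong to $\cp[\M_0]$. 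Iterating this seven times (the fixed finite number coming from Chernikov–Shelah) walks the weight bound up from $\kappa$ to $|L_6|\ge 2^\kappa$, which completes the successor step; the limit step is immediate since a space of weight $\le\beth_\alpha=\sup_{\beta<\alpha}\beth_\beta$ embeds into a product of $\le\beth_\alpha$-many spaces but one rather argues directly: any such $X$ has weight $\le\beth_\beta$ for some $\beta<\alpha$ (if $\alpha$ has uncountable cofinality) or is handled by writing $X\hookrightarrow\prod_n X_n$ with $w(X_n)\le\beth_{\beta_n}$, $\beta_n\nearrow\alpha$, invoking countable-product stability.

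The main obstacle, and the technical heart of the argument, is the single inductive \emph{cut step}: given that all Tychonoff spaces of weight $\le|L|$ are in $\cp[\M_0]$, showing that the linearly ordered space $\check L$ (with its order topology, or a $GO$-topology on it) also embeds into some member of $\cp[\M_0]$. This is where the Hahn-field machinery and Lemma~\ref{lemma:SCp} must be combined carefully: one needs $H^2(L)$ to be a \emph{real-closed} ordered field (so that its order topology behaves well and $\sep^F$ makes sense), one needs the order-dense subfield $H^2(D)$ for $D$ a small dense subset of the relevant order to control $i$-weight/networks, and one must check that the split-function embedding of Lemma~\ref{lemma:SCp}, suitably indexed by cuts rather than by points of $L$, is a homeomorphic embedding of the desired $GO$-space and not merely a bijection—this is the analogue, at the level of ordered fields, of the delicate verification in the proof that $GO$-spaces of countable linear weight lie in $\scp(\M_0^{<\w})$, but now run with $\IR$ replaced by $H^2(L)$ and with the order no longer second countable. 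Granting that verification, the bookkeeping—seven iterations, tracking cardinalities through $|L_{k+1}|\le|\check L_k|$ and $|H^2(L)|=\max\{|L|,\aleph_0\}$—is routine.
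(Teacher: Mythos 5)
Your overall architecture (transfinite induction on $\alpha$, the Chernikov--Shelah ladder $L_0,\dots,L_6$, Hahn fields, and the split-function Lemma~\ref{lemma:SCp}) matches the paper's, and your treatment of the base and limit cases is essentially fine. But there is a genuine gap at the heart of the successor step: you form the Hahn field over $L_k$ itself, i.e.\ $F=H^2(L_k)$, and Lemma~\ref{lemma:SCp} then yields a discrete subspace of $SC_p(F\times F,F)$ of cardinality only $|F|=\max\{|L_k|,\aleph_0\}=|L_k|$. That produces no cardinality jump: to climb one rung of the ladder you need a discrete set of cardinality $|\check L_k|\ge|L_{k+1}|$, which may be as large as $2^{|L_k|}$. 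You sense the problem when you speak of ``indexing by cuts rather than by points,'' but the functions $\sep^F_{a,a}$ are indexed by elements $a$ of the field $F$, so the only way to get $|\check L_k|$-many of them is to make the field itself that large. The paper's resolution is to take $F_k:=H^2(\check L_k)$, the Hahn field over the \emph{cut completion}: this field has cardinality $|\check L_k|$, yet its topological \emph{weight} is still $\le|L_k|$, because $L_k^{\pm}$ is order dense in $\check L_k$, hence $H^2(L_k^{\pm})$ is an order-dense subfield of $F_k$ of cardinality $|L_k|$, and the weight of an ordered field equals its density. This weight computation is exactly what lets the inner inductive hypothesis (all Tychonoff spaces of weight $\le|L_k|$ lie in $\cp[\M_0]$) apply to $F_k$, while the discrete subspace $D\subset SC_p(F_k\times F_k,F_k)$ has the large cardinality $|F_k|=|\check L_k|$. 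Without this your argument does not close.

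A second, related misstep: you take as your target the embedding of (a $GO$-refinement of) the linearly ordered space $\check L_k$ into a member of $\cp[\M_0]$, and assert that this ``witnesses that all Tychonoff spaces of weight $\le|L_{k+1}|$ belong to $\cp[\M_0]$.'' That inference is unjustified: having some $GO$-space of cardinality $|\check L_k|$ in the class says nothing about arbitrary Tychonoff spaces of that weight. What is actually needed---and what makes the delicate $GO$-embedding verification you flag unnecessary here---is a \emph{discrete} subspace $D$ with $|D|\ge|L_{k+1}|$; then $C_p(D)\cong\IR^{D}$ lies in $\cp[\M_0]$ and contains a copy of every Tychonoff space of weight $\le|D|$. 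Discreteness of $\{\sep^{F}_{a,a}:a\in F\}$ is immediate from $\sep^{F}_{a,a}(a,a)=0$ while $\sep^{F}_{b,b}(a,a)=1$ for $b\ne a$, so no order-topological analysis of $\check L_k$ is required at this point.
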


\begin{proof}
We prove the lemma  by transfinite induction. For $\alpha=0$ the statement is trivially true. Assume that for some countable ordinal $\alpha$ and all ordinals $\beta<\alpha$ we have proved that all Tychonoff spaces of weight $\leq\beth_\beta$ belong to the class
$\cp[\M_0]$. If the ordinal $\alpha$ is limit, then $\beth_\alpha=\sup_{\beta<\alpha}\beth_\beta$. By the inductive assumption, the cardinals $\beth_\beta$, $\beta<\alpha$, endowed with the discrete topology belong to the class $\cp[\M_0]$. Since this class is closed under taking countable topological sums, the cardinal $\beth_\alpha$ belongs to the class $\cp[\M_0]$. Then the Tychonoff power $\IR^{\beth_\alpha}$, being homeomorphic to the function space $C_p(\beth_\alpha)$, belongs to the class $\cp[\M_0]$ too. Since each Tychonoff space of weight $\leq\beth_\alpha$ embeds into $\IR^{\beth_\alpha}\in\cp[\M_0]$, the class $\cp[\M_0]$ contains all Tychonoff spaces of weight $\leq\beth_\alpha$. This completes the inductive step in the case $\alpha$ is a limit ordinal.

Now assume that $\alpha=\beta+1$ is a successor ordinal. Then $\beth_\alpha=2^{\beth_\beta}$. By Fact~\ref{f:cher-shel}, there are infinite linearly ordered sets $L_0,\dots,L_6$ such that $|L_0|\leq\beth_\beta$, $\beth_\alpha \leq |L_6|$, and $|L_{k+1}|\le |\check L_{k}|$ for all $k\in\{0,\dots,5\}$. By induction, for every $k\in\{0,\dots,6\}$ we shall prove that each Tychonoff space of weight $\leq |L_k|$ belongs to the class $\cp[\M_0]$.
For $k=0$ this follows from the inequality $|L_0|\leq\beth_\beta$ and the inductive assumption. Assume that for some number $k\in\{0,\dots,6\}$ we have proved that every Tychonoff space of weight $\leq|L_{k}|$ belongs to the class $\cp[\M_0]$. Consider the linearly ordered space $\check L_{k}$, and observe that $L_{k}^\pm$ is an order dense subset of $\check L_{k}$ and
\[
\{(\leftarrow,x):x\in L^\pm_{k}\}\cup\{(x,\rightarrow):x\in L_{k}^\pm\}
\]
is a base of the order topology of $\check L_{k}$. So, the linearly ordered space $\check L_{k}$ has topological weight $\leq|L_{k}|$. Now consider the Hahn field $H^2(\check L_{k})$ over the linearly ordered set $\check L_k$ and its order dense subfield $H^2(L_{k}^\pm)$.
Taking into account that the weight of an ordered field is equal to its density, we obtain that the ordered field $F_{k}:=H^2(\check L_{k})$ has topological weight
\[
w(F_{k})\leq|H^2(L_{k}^\pm)|=|L_{k}^\pm|=|L_{k}|.
\]
By the inductive assumption, the space $F_{k}$ belongs to the  class $\cp[\M_0]$ and so does the function space $C_p(F_k,C_p(F_k,F_k))$. This function space is canonically homeomorphic to the subspace \mbox{$SC_p(F_k\times F_k,F_k)$} of $F_k^{F_k\times F_k}$ consisting of separately continuous functions.

By Lemma \ref{lemma:SCp}, the space $SC_p(F_k\times F_k,F_k)\in\cp[\M_0]$ contains a discrete subspace $D$ of cardinality $|F_k|$. Observe that
\[
|D|=|F_k|=|H^2(\check L_k)|=|\check L_k|\ge |L_{k+1}|.
\]
Hence $\cp[\M_0]$ contains also the function space $C_p(D)$, which is homeomorphic to $\IR^D$.
Since each Tychonoff space of cardinality $\le|L_{k+1}|\le|D|$ embeds into $\IR^D$, the class $\cp[\M_0]$ contains all Tychonoff spaces of cardinality $\le|L_{k+1}|$. This completes the inductive step. For $k=6$ we get $|L_6|\geq \beth_\alpha$, which implies that the class $\cp[\M_0]$ contains all Tychonoff spaces of weight $\leq\beth_\alpha$.
 \end{proof}

Now we are ready to prove Theorem~\ref{t:Cp-Top-Top}.
\begin{proof}[Proof of Theorem \ref{t:Cp-Top-Top}]
Denote by $\Tych(\beth_{\w_1})$ the class of all Tychonoff spaces of weight strictly less than $\beth_{\w_1}$. Clearly, $\Tych(\beth_{\w_1})$ is stable. Let $X,Y\in \Tych(\beth_{\w_1})$. Then $\kappa=\max\{w(X),w(Y)\}<\beth_{\w_1}$, $|X|\leq 2^{w(X)}\leq 2^\kappa$ and
\[
w(C_p(X,Y))\leq w(Y^X)\leq w(Y)^{|X|}\leq \kappa^{|X|}\leq 2^{\kappa\cdot |X|}\leq 2^{2^\kappa}<\beth_{\w_1},
\]
which implies $C_p(X,Y)\in\Tych(\beth_{\w_1})$. Therefore, the class $\Tych(\beth_{\w_1})$ is $C_p$-stable and hence
$\cp[\M_0]\subset \Tych(\beth_{\w_1})$. On the other hand, Lemma~\ref{l:Cp-TT} guarantees that
$\Tych(\beth_{\w_1})\subset\cp[\M_0]$ and hence $\cp[\M_0]=\Tych(\beth_{\w_1})$.

Since each Tychonoff space $X$ has weight $w(X)\leq 2^{|X|}$ and cardinality $|X|\leq 2^{w(X)}$, the inequalities $w(X)<\beth_{\w_1}$ and $|X|<\beth_{\w_1}$ are equivalent, which implies that  the class $\Tych(\beth_{\w_1})$ coincides with the class of all Tychonoff spaces of cardinality $< \beth_{\w_1}$.
\end{proof}

\section{Open Problems}

By Theorem~\ref{t:lower}, the class $\scp(\M_0^{<\w})$ contains all cosmic spaces and all metrizable spaces of weight $\le\mathfrak c$. The classes of cosmic spaces and of metrizable spaces are contained in the class of $\sigma$-spaces. 
It can be shown that each $\sigma$-space $X$ of weight $\le\mathfrak c$ admits a continuous injective map onto a metrizable space of weight $\le\mathfrak c$ and hence has countable $i$-weight.

\begin{problem}
Does the class $\scp(\M_0^{<\w})$ contain all $\sigma$-spaces of weight $\le\mathfrak c$?
\end{problem}

It is easy to see that for every metrizable separable space $X$ the space $SC_p(X^{<\w})$ is canonically homeomorphic to the countable product $\prod_{n\in\IN}SC_p(X^n)$. We can ask how different are the function spaces $SC_p(X^n)$ for various $n\in\IN$.

\begin{problem}
Is it true that for every positive integers $n<m$ the space $SC_p([0,1]^m)$ does not embed into the space $SC_p([0,1]^n)$? {\rm (The answer is affirmative if $n=1$ as $SC_p([0,1])=C_p([0,1])$ is cosmic while $SC_p([0,1]^m)$ is not).}
\end{problem}

The affirmative answer to the following problem would simplify the
proof of Theorem~\ref{t:Cp-Top-Top}. Observe that this problem has affirmative answer under Generalized Continuum Hypothesis.

\begin{problem} Is it true that for every infinite cardinal $\kappa$ there exists a Tychonoff space $X$ of weight $w(X)\le\kappa$ such that the function space $SC_p(X\times X)$ contains a discrete subspace of cardinality $2^\kappa$?
\end{problem}

Using Fact \ref{f:Cosmic} and (the proof of) Theorem~\ref{t:Cp-M0-M0} one can show for the class $\mathfrak{C}$ of all cosmic spaces we get $\cp^{\mathfrak{C},\Top}[\M_0]=\cp^{\M_0\kern-1pt,\Top}[\M_0]= \scp\left(\MN_0\right)$. It would be interesting to determine the extensions $\cp^{\X,\Top}[\M_0]$ of the class $\M_0$ for some other classes $\X$ of topological spaces, in particular, for the class $\mathfrak{L}$ of Lindel\"{o}f spaces.
\begin{problem}
Describe the class $\cp^{\mathfrak L,\Top}[\M_0]$. Does it contain non-metrizable compacta?
\end{problem}

\end{document}